\documentclass{amsart}

\usepackage{graphicx,psfrag,color}
\usepackage{caption}
\usepackage{subcaption,overpic}

\newtheorem{theorem}{Theorem}[section]

\newtheorem{lemma}[theorem]{Lemma}
\newtheorem{claim}{Claim}
\theoremstyle{definition}
\newtheorem{definition}[theorem]{Definition}
\newtheorem{question}[theorem]{Question}
\newcommand{\Down}{\Delta}
\newcommand{\Up}{\Upsilon}

\theoremstyle{remark}
\newtheorem*{claim*}{Claim}
\DeclareMathOperator{\pw}{pw}
\DeclareMathOperator{\tw}{tw}
\newcommand{\set}[1]{\ensuremath{\left\{ #1 \right\}}}

\title{Posets with cover graph of pathwidth two have bounded dimension}

\author[Bir\'o]{Csaba Bir\'o}
\address{Department of Mathematics, University of Louisville,
  Louisville, KY 40292}
\email{csaba.biro@louisville.edu}
\author[Keller]{Mitchel T.~Keller}
\address{Department of Mathematics, Washington and Lee University,
  Lexington, VA 24450}
\email{kellermt@wlu.edu}
\author[Young]{Stephen J.~Young}
\address{Department of Mathematics, University of Louisville,
  Louisville, KY 40292}
\email{stephen.young@louisville.edu}
\date{28 May 2015}
\begin{document}

\begin{abstract}
  Joret, Micek, Milans, Trotter, Walczak, and Wang recently asked if
  there exists a constant $d$ such that if $P$ is a poset with cover
  graph of $P$ of pathwidth at most $2$, then $\dim(P)\leq d$. We
  answer this question in the affirmative by showing that $d=17$ is
  sufficient. We also show that if $P$ is a poset containing the
  standard example $S_5$ as a subposet, then the cover graph of $P$
  has treewidth at least $3$.
\end{abstract}
  \subjclass[2010]{06A07,05C75,05C83}

\maketitle

\section{Introduction}

Although the dimension of a poset and the treewidth of a graph have
been prominent subjects of mathematical study for many years, it is
only recently that the impact of the treewidth of graphs on poset
dimension has received any real attention. This new interest in
connections between these topics has led to recasting an old result in
terms of treewidth. It is natural to phrase the following result from
1977 in terms of treewidth, which had been defined (using a different
name) by Halin in \cite{Hal-76} a year earlier. However, the
importance of treewidth (and the use of that name) only became widely
known through the work of Robertson and Seymour \cite{Rob-Sey-84}
nearly a decade
later. 

\begin{theorem}[Trotter and Moore \cite{Tro-Moo-77}]\label{T:treedim}
  If\/ $P$ is a poset such that the cover graph of\/ $P$ is a tree,
  then\/ $\dim(P) \leq 3$. Equivalently, if\/ $P$ is a poset such that
  the cover graph of\/ $P$ is connected and has treewidth at most\/
  $1$, then\/ $\dim(P)\leq 3$.
\end{theorem}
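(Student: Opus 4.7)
The plan is to prove the theorem by induction on $|P|$; the two formulations are equivalent since a connected graph has treewidth at most one exactly when it is a tree, and the base cases are trivial. For the inductive step, let $T$ denote the cover graph of $P$, pick a leaf $v$ of $T$ with unique neighbor $u$, and observe that $\{u,v\}$ is a cover relation in $P$; without loss of generality $v \prec u$. The subposet $P' = P - v$ has $T - v$ as its cover graph, still a tree, so by the inductive hypothesis $\dim(P') \leq 3$ and there is a realizer $L'_1, L'_2, L'_3$ of $P'$.

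I would then extend each $L'_i$ to a linear extension $L_i$ of $P$ by inserting $v$ at a carefully chosen position, respecting the constraint that $v$ must come before $u$. The critical pairs requiring attention are those of the form $(v,x)$ with $x$ incomparable to $v$ in $P$: for each such $x$, some $L_i$ must place $x$ before $v$. The key structural fact is that since $T$ is a tree and $v$ is a leaf, the unique $v$-to-$x$ path in $T$ necessarily passes through $u$; the alternating pattern of up-steps and down-steps along this path forces some internal vertex to be either a common upper bound or a common lower bound of $v$ and $x$ in $P$, and this local structure tells us which $L'_i$ admits an extension reversing $(v,x)$.

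The main obstacle I expect is coordinating the three insertion positions so that every such critical pair is reversed by at least one $L_i$. A clean way to make this tractable is to strengthen the inductive hypothesis, requiring that the realizer $\{L'_1, L'_2, L'_3\}$ satisfy some additional control at the neighbor $u$ — for instance, that one extension places $u$ as early as the order permits while another places $u$ as late as possible, and that the third has a prescribed relative position. With the right strengthening, inserting $v$ reduces to a mechanical case analysis based on the observation above, and the verification that the resulting triple forms a realizer of $P$ becomes a routine check along the $v$-to-$x$ tree path for each incomparable $x$. Identifying the correct strengthened invariant is the conceptual heart of the argument; once in hand, the induction goes through essentially by local reasoning at the leaf.
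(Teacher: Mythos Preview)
The paper does not contain a proof of this statement: Theorem~\ref{T:treedim} is quoted as a classical result of Trotter and Moore and serves only as background and motivation. There is therefore nothing in the paper to compare your proposal against.

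That said, your sketch contains a genuine error. The claim that ``the alternating pattern of up-steps and down-steps along this path forces some internal vertex to be either a common upper bound or a common lower bound of $v$ and $x$'' is false. Take the path $v,u,a,b,x$ in $T$ with cover relations $v<u$, $a<u$, $a<b$, $x<b$ in $P$. Then $v$ and $x$ are incomparable, yet none of the internal vertices $u,a,b$ is comparable to both of them: $u$ is above $v$ but incomparable to $x$; $b$ is above $x$ but incomparable to $v$; and $a$ is incomparable to both $v$ and $x$. So the ``local structure'' you rely on to decide which $L_i'$ admits a suitable extension does not exist in general, and the mechanism by which your three extensions are supposed to reverse every pair $(v,x)$ breaks down. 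Beyond this, the strengthened inductive hypothesis you allude to is never actually specified; since you yourself identify it as ``the conceptual heart of the argument,'' what you have written is an outline of a strategy rather than a proof.
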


Recently there have been a number of papers on the dimension of planar
posets \cite{Fel-Li-Tro-10,Fel-Tro-Wie-13-u,Str-Tro-14}. This work
naturally led to the question of bounding a poset's dimension in terms
of the treewidth of its cover graph. Over $30$ years ago, Kelly showed
in \cite{Kel-81} that there are planar posets having arbitrarily large
dimension by constructing a planar poset containing $S_d$, the
standard example of dimension $d$, as a subposet. These examples use
large height to stretch out $S_d$ to allow a planar embedding. Joret
et al.\ \cite{Jor-Mic-Mil-Tro-Wal-Wan-13-u} point out that the
pathwidth of Kelly's examples is $3$ for $d\geq 5$. Thus, any bound on
dimension solely in terms of pathwidth or treewidth is
impossible. However, they were able to show that it suffices to add a
bound on the height in order to bound the dimension. In particular,
they proved the following:

\begin{theorem}[Joret et al.\ \cite{Jor-Mic-Mil-Tro-Wal-Wan-13-u}]
  For every pair of positive integers\/ $(t,h)$, there exists a least
  positive integer\/ $d=d(t,h)$ so that if\/ $P$ is a poset of height at
  most\/ $h$ and the treewidth of the cover graph of\/ $P$ is at most\/ $t$,
  then\/ $\dim(P)\leq d$.
\end{theorem}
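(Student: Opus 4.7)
My plan is to induct on the number of elements of $P$, with the treewidth bound $t$ and the height bound $h$ fixed, using Theorem~\ref{T:treedim} for the base case $t = 1$ (which gives $\dim(P) \leq 3$ independently of $h$) and for very small $|P|$, where $\dim(P)$ is trivially at most $|P|$. I fix a tree decomposition $(T, (B_u)_{u \in V(T)})$ of the cover graph $G$ with bags of size at most $t+1$, and invoke the standard fact that some bag $B$ has the property that every component of $G \setminus B$ contains at most $\tfrac{2}{3}|V(G)|$ vertices. For each component $G_i$ of $G \setminus B$, set $P_i$ to be the subposet of $P$ induced on $B \cup V(G_i)$; then the cover graph of $P_i$ has treewidth at most $t$ and height at most $h$, and as soon as there are at least two components each $P_i$ is strictly smaller than $P$, permitting a recursive call.

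The key structural tool is to classify each element $x \notin B$ by its \emph{profile} over $B$: the function assigning to each $b \in B$ one of the four symbols $<$, $>$, $=$, $\parallel$. There are at most $4^{t+1}$ profiles, a bound depending only on $t$. Because $B$ is a separator in the cover graph, any chain of $P$ traveling from one side of $B$ to the other must pass through $B$, and the height bound then forces the number of elements of such a chain strictly on each side to be at most $h-1$. Consequently, the global order of $P$ can be reconstructed from the within-component orders together with the profiles of all elements relative to $B$. I would then build a realizer of $P$ by recursively realizing each $P_i$ and merging these realizers using an additional family of linear extensions designed to witness cross-component incomparabilities by profile class; the number of extensions needed at each merge should depend only on $t$ and $h$, not on $|P|$.

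The main obstacle is controlling this merge step: profiles alone do not dictate how two mutually incomparable elements from different components must be ordered in a linear extension, so a Ramsey-style pigeonhole on the finite profile space is required. Concretely, I would argue that if $P$ contained a standard example $S_d$ for very large $d$ (in terms of $t$ and $h$), then by pigeonhole many of its $a_i$--$b_i$ pairs would share a common profile over $B$; combined with the separator property of $B$ and the height bound, this would force a forbidden configuration in the cover graph, contradicting the treewidth assumption. The quantitative bound $d(t,h)$ produced this way is likely to grow rapidly---tower-type in $t$ with base depending on $h$---but the statement requires only existence, which this approach provides.
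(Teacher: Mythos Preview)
First, note that the paper does not prove this theorem at all: it is quoted as a result of Joret et al.\ and cited to \cite{Jor-Mic-Mil-Tro-Wal-Wan-13-u}. So there is no ``paper's own proof'' to compare against; your proposal has to stand on its own.

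As written, the plan has a genuine gap in the induction scheme. You induct on $|P|$ with $t$ and $h$ fixed, split at a balanced separator bag $B$, and aim to show that a realizer of $P$ can be assembled from realizers of the pieces $P_i$ together with a merge family of size $g(t,h)$. Even granting the merge step, this yields an inequality of the shape $\dim(P)\le \max_i \dim(P_i) + g(t,h)$. The inductive hypothesis gives $\dim(P_i)\le d(t,h)$, so you conclude $\dim(P)\le d(t,h)+g(t,h)$, which does not return you to $d(t,h)$. Unrolling the recursion instead, the balanced separator gives depth $\Theta(\log |P|)$, producing a bound of order $g(t,h)\cdot\log|P|$, which depends on $|P|$ and hence does not establish the existence of a constant $d(t,h)$. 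Your closing remark about a ``tower-type in $t$'' bound suggests you have in mind an induction on $t$ (or on $h$), but the argument you actually sketch is an induction on $|P|$; these are not interchangeable, and the separator you choose does not decrease $t$ or $h$.

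The fallback via standard examples is also insufficient. Excluding large $S_d$ as a subposet does not bound dimension: there exist posets of arbitrarily large dimension containing no $S_3$ (e.g., interval orders, or the shift graph/double shift constructions). So even if your pigeonhole-on-profiles argument did rule out large $S_d$, it would not finish the proof. Moreover, the sentence ``this would force a forbidden configuration in the cover graph, contradicting the treewidth assumption'' is doing all the work and is left entirely unspecified; the separator property and the height bound do not by themselves yield such a contradiction. If you want a workable strategy along these lines, you would need a direct argument that every min--max incomparable pair can be reversed using a bounded-size family of linear extensions constructed globally (not via a size recursion), which is essentially what the Joret et al.\ proof does.
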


Motivated by the observation about the pathwidth of Kelly's examples,
Joret et al.\ concluded their paper by asking if there is a constant
$d$ such that if $P$ is a poset whose cover graph has pathwidth at
most $2$, then $\dim(P)\leq d$. They also asked this question with
treewidth replacing pathwidth. (An affirmative answer to the latter
question would imply an affirmative answer to the former.)  In this
paper, we show that the answer for pathwidth $2$ is in fact ``yes''
with the following result:

\begin{theorem}\label{T:weak-main}
  Let\/ $P$ be a poset. If the cover graph of\/ $P$ has pathwidth at most\/
  $2$, then \/$\dim(P)\leq 17$.
\end{theorem}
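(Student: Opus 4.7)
The plan is to fix a path decomposition $(B_1, \dots, B_m)$ of the cover graph of $P$ with $|B_i| \le 3$ and then construct a bounded family of linear extensions whose intersection is $P$. After reducing to the case where $P$ is connected (using that dimension is preserved up to an additive constant under disjoint union) and smoothing the decomposition so that consecutive bags differ by exactly one element, each vertex $v$ becomes associated with a contiguous interval $I_v = [a_v, b_v]$ of bag indices. At most three such intervals overlap at any position, and every cover edge of $P$ has both endpoints lying in a common bag.

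Next I would define a handful of linear extensions of $P$ by sweeping the decomposition from left to right with various tie-breaking rules, and classify the critical pairs $(x,y)$ of $P$ according to the geometry of $I_x$ and $I_y$: \emph{local} pairs, whose intervals overlap or lie close together, and \emph{long-distance} pairs, whose endpoints are widely separated and whose incomparability is witnessed only through long chains threading narrow windows. For local pairs the bag-size bound of $3$ is enough to reverse each pair by an explicit small collection of sweep rules, because only boundedly many configurations can arise in a single window. This disposes of the local pairs with a constant number of extensions.

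The main obstacle will be the long-distance critical pairs, where the witnessing chains must pass through many three-element windows and could in principle interleave in complicated ways. Here the hope is that the very same structural obstruction the authors exploit in the second half of the paper, namely that containing $S_5$ forces treewidth at least $3$ (so under pathwidth $2$, standard examples are tightly limited in how they can ``stretch'' along the decomposition), can be turned into a positive structure lemma classifying long-distance pairs into a bounded number of types, each reversible by a single additional linear extension. Combining the sweep extensions handling local pairs with type-specific extensions handling long-distance pairs should yield at most $17$ linear extensions whose intersection is $P$, giving $\dim(P) \le 17$. The explicit constant $17$ would then arise as the accumulated sum of small constants from the case analysis and is almost certainly not tight.
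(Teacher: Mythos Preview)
Your proposal is not a proof but a plan, and the crucial step---handling the ``long-distance'' critical pairs---is left entirely at the level of a hope. You write that you would like to turn the $S_5$-exclusion result into ``a positive structure lemma classifying long-distance pairs into a bounded number of types, each reversible by a single additional linear extension,'' but no such lemma is stated, and there is no reason to expect one to fall out of $S_5$-freeness alone: posets not containing $S_5$ can still have arbitrarily large dimension in general, so the mere absence of large standard examples does not by itself limit the number of linear extensions needed. The ``local pairs'' part is also unsubstantiated---you assert that boundedly many sweep rules suffice because only boundedly many configurations occur in a window, but a critical pair can fail to be reversed for global reasons even when its two points sit in nearby bags, so this needs a real argument. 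Finally, nothing in your outline pins down the constant; the number $17$ appears only as a target, not as the outcome of an explicit count.

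For comparison, the paper does not work with a path decomposition at all. It proves a structural characterization of the $2$-connected blocks of an $\mathcal{F}$-minor-free graph (each block is a ``parallel nearly outerplanar'' graph, with cut vertices on the outer cycle), then removes the interior vertices $Z$ of unidirected ears at a cost of $\dim(P)\le 2\dim(P-Z)+1$, and finally shows that $P-Z$ is the intersection of two extensions $\Upsilon$ and $\Delta$ each having an outerplanar cover graph. Invoking the Felsner--Trotter--Wiechert bound $\dim\le 4$ for outerplanar cover graphs gives $\dim(P-Z)\le 8$ and hence $\dim(P)\le 17$. The constant is thus $2\cdot(4+4)+1$, coming from three concrete reductions rather than a case analysis over a path decomposition. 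If you want to pursue a direct path-decomposition argument, the real work is to identify and prove the structure lemma you are currently only hoping for; the $S_5$ result in the paper is evidence that treewidth~$2$ is restrictive, not a tool that by itself yields a dimension bound.
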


In fact, the precise version of this result (Theorem~\ref{T:full-main}) is
intermediate between answering the pathwidth question and
answering the treewidth question, as we only need to exclude six of
the $110$ forbidden minors that characterize the graphs of pathwidth
at most $2$. (Treewidth at most $2$ is characterized simply by
forbidding $K_4$ as a minor.)

We show in Theorem~\ref{T:S5} that any poset containing the standard
example $S_5$ has treewidth at least $3$. This provides a small piece
of evidence in favor of the idea that if the treewidth of a poset is
at most $2$, then the poset's dimension is bounded.

Before proceeding to our proofs, we provide some definitions for
completeness. We then establish some essential properties of the
$2$-connected blocks of a graph of pathwidth at most $2$. We then
prove the more general version of Theorem~\ref{T:weak-main} and
conclude with the rather technical proof that posets containing $S_5$
have cover graphs of treewidth at least $3$.

\section{Definitions and Pathwidth $2$ Obstructions}

Let $P$ be a poset. If $x<y$ in $P$ and there is no $z\in P$ such that
$x < z < y$ in $P$, we say that $x$ is covered by $y$ (or $y$ covers
$x$) and write $x<:y$. For $x\in P$, the closed down set of $x$ is
$D[x] = \set{y\in P\colon y\leq x}$ and the closed up set of $x$ is $U[x] =
\set{y\in P\colon y\geq x}$. The cover graph of $P$ is the graph $G$ with
the elements of $P$ as its vertices in which $x$ is adjacent to $y$ in
$G$ if and only if $x<:y$ or $y<:x$. (If we view the order diagram of
$P$ as a graph, that graph is $P$'s cover graph.) The dimension of $P$
is the least $t$ such that there exist $t$ linear
extensions---collectively known as a \emph{realizer}---$L_1,\dots,L_t$
of $P$ with the property that $x<_Py$ if and only if $x<_{L_i}y$ for
$i=1,\dots, t$. An incomparable pair $(x,y)$ of $P$ is said to be
\emph{reversed} by a linear extension $L$ if $y<_L x$. To show that a
set $\mathcal{R}$ of linear extensions of a poset $P$ is a realizer,
it suffices to show that each incomparable pair is reversed by some
linear extension in $\mathcal{R}$.
By the \emph{standard example} $S_n$, we mean the subposet of the
lattice of subsets of $\set{1,2,\dots,n}$ induced by the singletons
and the $(n-1)$ sets. For further background on the combinatorics of
partially ordered sets, refer to Trotter's monograph \cite{trotter:dimbook}.



Let $G=(V,E)$ be a graph. A pair $(T,\mathcal{V})$, where $T$ is a tree and
$\mathcal{V}=(V_t)_{t\in T}$ with $V_t\subseteq V$ for all $t\in T$, is a
\emph{tree-decomposition} of $G$ if
\begin{enumerate}
\item $V(G)$ is the union of all the $V_t$;
\item for every $e\in E$, there exists a vertex $t$ of $T$ such that
  $e\subseteq V_t$; and
\item if $t_1,t_2,t_3$ are vertices of $T$ and $t_2$ lies on the
  unique path from $t_1$ to $t_3$ in $T$, then $V_{t_1}\cap
  V_{t_3}\subseteq V_{t_2}$.
\end{enumerate}
The sets $V_t$ are often referred to as the \emph{bags} of the
tree-decomposition. The \emph{width} of $(T,\mathcal{V})$ is $\max_t
|V_t|-1$. The \emph{treewidth} of $G$, which we denote by $\tw(G)$, is the
minimum width of a tree-decomposition of $G$. A
\emph{path-decomposition} of a graph is a tree-decomposition in which
the tree $T$ is a path. The \emph{pathwidth} of $G$, denoted by $\pw(G)$, is
the minimum width of a path-decomposition of $G$.

Following Diestel \cite{diestel}, we make the following definition of
a special type of path to improve the readability of parts of our
argument. If $G$ is a graph and $H$ is a subgraph of $G$, we say that
a path $P$ is an \emph{$H$-path} if $P$ is nontrivial and intersects
$H$ precisely at its two end vertices. The length of a path is the
number of edges it contains. We will also freely use terminology
regarding the block structure of graphs. Readers unfamiliar with this
terminology should consult Diestel's text \cite{diestel}, in
particular Chapter 3.

By a \emph{subdivision} of a graph $G$ we mean a graph $G'$ in which
some edges of $G$ are replaced by paths that are internally disjoint
from each other and the vertices of $G$. The original vertices of $G$
are called the \emph{branch vertices} of $G'$. If a graph $H$ contains a
subdivision of $G$ as a subgraph, then we say that $G$ is a
\emph{topological minor} of $H$. An \emph{inflation} of a graph $G$ is
a graph $G'$ formed by replacing the vertices $x$ of $G$ by disjoint
connected graphs $G_x$ and the edges $xy$ of $G$ by nonempty sets of
edges from $G_x$ to $G_y$. The vertex sets $V(G_x)$ are called the
\emph{branch sets} of $G'$. If a graph $H$ contains an inflation of $G$
as a subgraph, we say that $G$ is a \emph{minor} of $H$. Equivalently,
$G$ is a minor of $H$ if $G$ can be obtained from $H$ by a sequence of
vertex deletions, edge deletions, and edge contractions. Note that if
the maximum degree of $G$ is at most $3$, the notions of minor and
topological minor are equivalent. For further information on minors
and topological minors, see Diestel's text \cite{diestel}.

The set of graphs of pathwidth at most $k$ is a minor closed
family. Therefore, by the Graph Minor Theorem \cite{Rob-Sey-04}, this
set of graphs can be characterized by forbidding a finite set of
graphs as minors. For $k=2$, Kinnersley and Langston found the entire
set of $110$ obstructions in \cite{Kin-Lan-94}. The proof of this
paper's main result relies on only six graphs from their list, but
having the whole list at hand was critical to the development of our
proof. Besides the obvious obstruction $K_4$, the other five we must
exclude are depicted in Figure~\ref{F:pw2-obstruct}. It is elementary
to verify that these graphs have pathwidth $3$. We will refer to these
graphs in the proof by the names shown and use $\mathcal{F}$ to denote
$\set{K_4,T_1,\ldots,T_5}$. If a graph $G$ does not contain an element
of $\mathcal{F}$ as a minor, we will say that $G$ is
\emph{$\mathcal{F}$-minor free}.

\begin{figure}
  \centering
  \begin{subfigure}[b]{0.33\textwidth}
    \centering
    \includegraphics[scale=0.35]{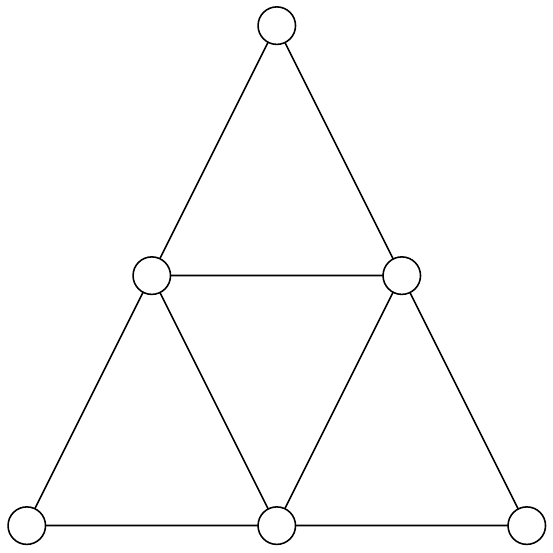}
    \caption{$T_1$}\label{6.4.1}
  \end{subfigure}%
  \begin{subfigure}[b]{0.33\textwidth}
    \centering
    \includegraphics[scale=0.35]{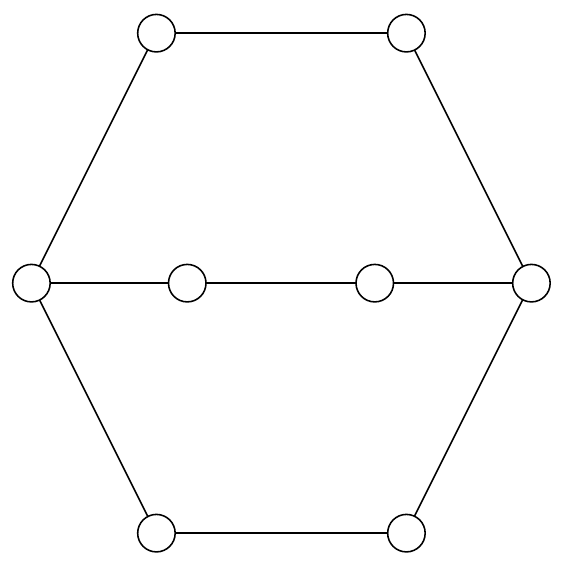}
    \caption{$T_2$}\label{8.2.1}
  \end{subfigure}%
  \begin{subfigure}[b]{0.33\textwidth}
    \centering
    \includegraphics[scale=0.35]{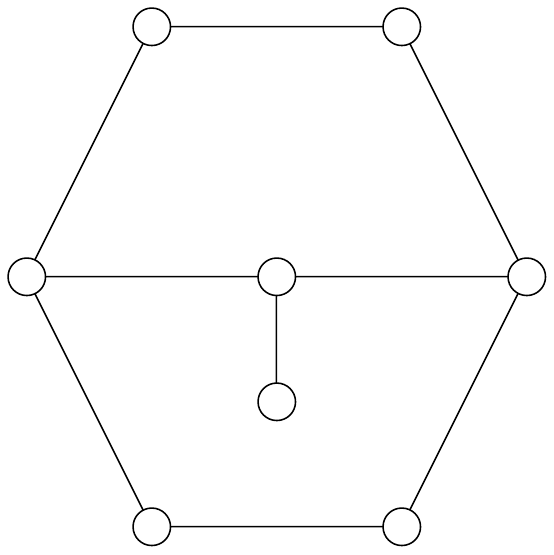}
    \caption{$T_3$}\label{8.2.2}
  \end{subfigure}

  \begin{subfigure}[b]{0.33\textwidth}
    \centering
    \includegraphics[scale=0.35]{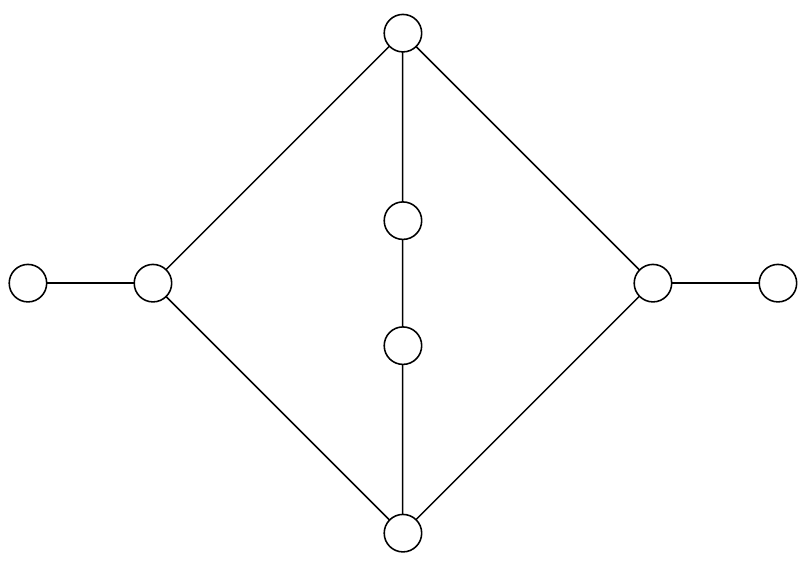}
    \caption{$T_4$}\label{8.2.3}
  \end{subfigure}\hspace{0.075\textwidth}
  \begin{subfigure}[b]{0.33\textwidth}
    \centering
    \includegraphics[scale=0.35]{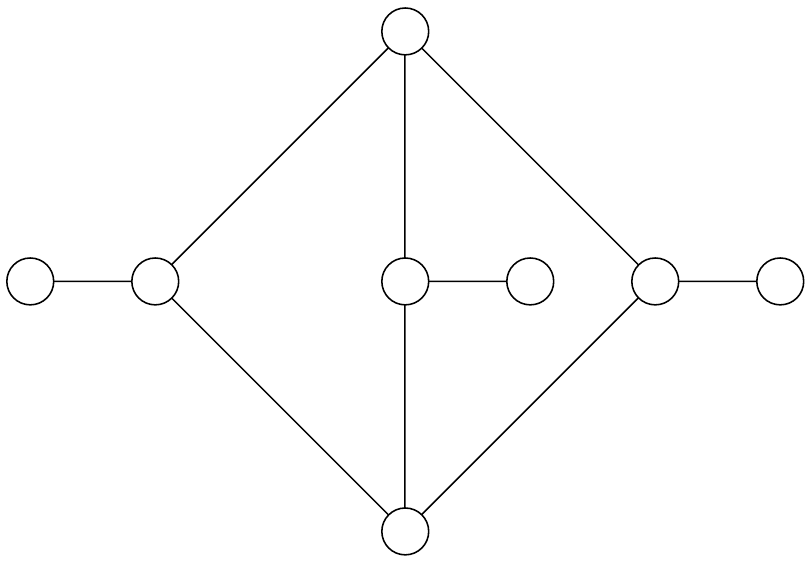}
    \caption{$T_5$}\label{8.2.4}
  \end{subfigure}

  \caption{Five key obstructions for pathwidth $2$.}\label{F:pw2-obstruct}
\end{figure}

\section{Properties of the $2$-connected blocks}

We begin without restricting our attention to only cover graphs. In
this section, we consider a graph $G$ such that $\pw(G)\leq 2$ and
prove strong properties of the block structure. This structure is
essential in the proof of our main theorem. To establish this
structural result, we first make the following definition.

\begin{definition}
  A \emph{parallel nearly outerplanar graph} is a graph that consists
  of a longest cycle $C$ with vertices labelled (in order) as
  $x_1,x_2,\ldots,x_k,y_l,y_{l-1},\ldots,y_1$ along with some chords
  and chords subdivided exactly once. The chords and subdivided chords
  have attachment points $x_{i_1},y_{j_1},\dots,x_{i_m},y_{j_m}$
  such that $i_1\leq\cdots\leq i_m$ and $j_1\leq\cdots\leq j_m$.
\end{definition}

An example of a parallel nearly outerplanar graph is shown in
Figure~\ref{F:codex}. We think of the vertices along the bottom of the
cycle as being the $x_i$ and those along the top as being the
$y_j$. Vertices to the left of the leftmost chord and to the right of
the rightmost chord could be either $x_i$'s or $y_j$'s.

\begin{figure}
\includegraphics[scale=0.5]{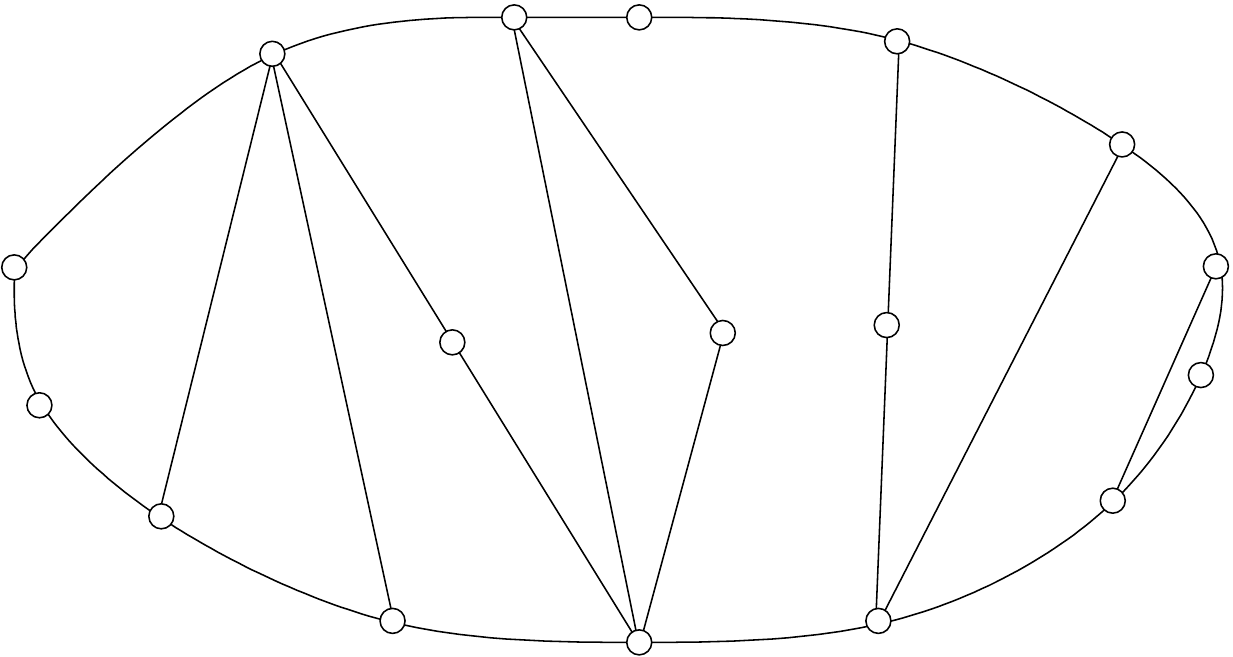}
\caption{A parallel nearly outerplanar graph.}\label{F:codex}
\end{figure}

\begin{lemma}
  A graph\/ $G$ is a parallel nearly outerplanar graph if and only if\/
   $G$ is\/ $2$-connected and\/ $\pw(G)\leq 2$.\label{lem:codex}
\end{lemma}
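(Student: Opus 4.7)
The plan is to prove the two directions of the equivalence separately.

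The easier direction asserts that a parallel nearly outerplanar graph $G$ is $2$-connected and has $\pw(G) \le 2$. Building $G$ from $C$ by successively adding each chord (length-$1$ ear) or subdivided chord (length-$2$ ear) preserves $2$-connectedness, so $G$ is $2$-connected. For the pathwidth bound I would exhibit a width-$2$ path decomposition obtained by sweeping across $C$ from left to right: at each step the bag holds the current bottom vertex $x_i$, the current top vertex $y_j$, and at most one auxiliary vertex (either the other endpoint of an incident chord or the middle vertex of an incident subdivided chord). The monotonicity hypotheses $i_1 \le \cdots \le i_m$ and $j_1 \le \cdots \le j_m$ guarantee that the chords are encountered in a consistent order during the sweep, so a single auxiliary slot suffices and every bag has size at most three.

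For the harder direction, let $G$ be $2$-connected with $\pw(G) \le 2$, and fix a longest cycle $C$ in $G$. The key structural claim is that every edge of $G$ not in $C$ is either a chord of $C$ or belongs to a $C$-path of length $2$. I would prove this by contradiction: if some $C$-path $P$ has length at least $3$, then the longest-cycle hypothesis forces both arcs of $C$ between the endpoints of $P$ to have length at least $|P|$, and $2$-connectedness produces an additional $C$-path; combining $P$ with the two arcs of $C$ and this extra $C$-path realises some member of $\mathcal{F}$ as a minor, contradicting $\pw(G)\le 2$. A parallel argument rules out components of $G - V(C)$ with more than one vertex and rules out middle vertices of length-$2$ $C$-paths having three or more neighbours on $C$.

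Once every ear is a chord or a length-$2$ $C$-path, I would produce the required labelling of $V(C)$ by cutting $C$ at two edges so that the two resulting arcs separate the pair of attachment points of every (possibly subdivided) chord and so that the attachment indices traversed along the arcs are consistently ordered. If no such cut existed, some two chords would have ``crossing'' attachments on $C$, and contracting each arc of $C$ between consecutive attachment vertices to a single edge would produce a $K_4$-minor, again contradicting $\pw(G) \le 2$. The principal obstacle throughout is the first part of the hard direction: eliminating long $C$-paths and other complex attachments, which requires balancing the longest-cycle hypothesis---and the room it provides for auxiliary paths inside $C$---against the absence of the specific minors in $\mathcal{F}$, while handling subdivided chords uniformly with ordinary chords.
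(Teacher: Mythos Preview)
Your labeling step has a genuine gap. You assert that if no cut of $C$ into two arcs separates the endpoints of every chord, then some two chords must cross, and hence $G$ has a $K_4$-minor. This implication is false. Take the $3$-sun: the hexagon on $1,\dots,6$ with chords $13$, $35$, $51$. No two of these chords cross (each pair shares an endpoint), yet any partition of the hexagon into two arcs places at least two of $1,3,5$ on the same arc, so some chord has both endpoints on one side. This graph is outerplanar and hence $K_4$-minor-free, so your argument cannot exclude it. The paper closes this case with a different obstruction, $T_1$, which is exactly the configuration of three ears whose attachment pairs occur in cyclic order $a_1,b_1,a_2,b_2,a_3,b_3$ around $C$; excluding $T_1$ (not $K_4$) is what forces the ears to admit the parallel labeling.

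There is a smaller gap in your treatment of long ears. You write that ``$2$-connectedness produces an additional $C$-path,'' but a theta graph---a cycle together with a single ear---is already $2$-connected, so no further $C$-path need exist. The paper's argument instead observes that a $C$-path with at least two internal vertices, together with the two arcs of $C$ between its endpoints (each of which also has at least two internal vertices, since $C$ is longest), directly yields the forbidden minor $T_2$; no auxiliary path is required, and $K_4$ does not appear here either.

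The common thread is that your proposal effectively uses only $K_4$-freeness, i.e., $\tw(G)\le 2$, whereas the lemma genuinely needs $\pw(G)\le 2$. The two specific pathwidth-$2$ obstructions $T_1$ and $T_2$ carry the weight in the hard direction, and your outline does not invoke them.
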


\begin{proof}
  It is easy to see that every parallel nearly outerplanar graph is
  $2$-connected and has pathwidth at most $2$. A path-decomposition of
  width $2$ can be obtained by starting with the bag containing $x_1$
  and $y_1$ and proceeding through the $x_i$ and $y_j$ by increasing
  subscript. After all edges incident with $x_i$ have had their other
  attachment point included in a bag with $x_i$, the bag
  $\set{x_i,x_{i+1},y_j}$, where $y_j$ is the ``current'' vertex from
  the other side of the cycle, covers the edge $x_ix_{i+1}$. We can
  then remove $x_i$ from the bag and continue. A symmetric process is
  used to move from $y_j$ to $y_{j+1}$ after covering all edges
  incident with $y_j$. The internal vertex of a subdivided chord
  appears in a bag with precisely its two attachment points.

  For the converse, let $C$ be a longest cycle in $G$. A $C$-path will
  be called an \emph{ear}. We first note that $C$ cannot have
  crossing ears. More precisely, if $P$ and $Q$ are ears, $V(C)\cap
  V(P)=\{p_1,p_2\}$, and $V(C)\cap V(Q)=\{q_1,q_2\}$, then the order
  of these intersection vertices on $C$ must be
  $p_i,p_{3-i},q_j,q_{3-j}$ for some $i,j\in\set{1,2}$. If this were not the
  case, then $G$ would have a $K_4$-minor, forcing $\pw(G)\geq 3$.

  Next we show that no ear may have more than one internal
  vertex. Indeed, if $P$ is an ear with at least two internal vertices
  and $V(C)\cap P=\{v_1,v_2\}$, then both paths between $v_1$ and
  $v_2$ on $C$ must contain at least two internal vertices, for
  otherwise $C$ is not the longest cycle. If this occurs, then $G$ has
  a $T_2$-minor.

  We now show that the internal vertex, if one exists, of any ear is
  of degree $2$. Let $v$ be the internal vertex of the ear $xvy$, and
  suppose that the degree of $v$ is at least $3$. Let
  $H$ be the subgraph induced by the vertices of $C$ and the vertex
  $v$. If $v$ has degree at least $3$ in $H$, then $H$ contains a
  $K_4$-minor. Otherwise, there is a $v'\in V(G)$ such that $v' v\in
  E(G)$, but $v'\not\in V(H)$. Let $H'$ be the subgraph of $G$ formed
  from $H$ by adding the vertex $v'$ and edge $vv'$. Since $G$ is
  $2$-connected and $H'$ is not, there is an $H'$-path $P$ (possibly
  just a single edge) with one endpoint being $v'$. The other endpoint
  may only be $x$ or $y$, since otherwise we have a $K_4$-minor. Without
  loss of generality, the other endpoint is $x$, which implies that
  $xPv'vy$ is an ear with at least two internal vertices, a
  contradiction.

  We have now shown that $G$ contains a (longest) cycle and some
  non-crossing ears with at most one inner vertex which must have
  degree two. The only thing that remains to be shown is that the
  vertices of the cycle may be labeled as in the definition,
  effectively placing an ordered structure on the ears. If this were
  not true, there would be three ears with attachment points
  $a_1,b_1$, $a_2,b_2$, and $a_3,b_3$ that appear around the longest
  cycle of $G$ ordered as $a_1,b_1,a_2,b_2,a_3,b_3$ around $C$, with
  the possibility that $b_i=a_{i+1}$ for any $i$ (cyclically). In this
  case, $G$ contains the forbidden minor $T_1$, which gives our
  final contradiction.
\end{proof}

We observe that our proof of the ``if'' direction of
Lemma~\ref{lem:codex} only requires that $G$ is $2$-connected and not
contain $K_4$, $T_1$, or $T_2$ as a minor. Furthermore, the cycle
bounding the infinite face may be chosen to be \emph{any} longest
cycle of the graph, a fact which we will use in the proof of Lemma~\ref{lem:long-cycle}.

We note that after proving Lemma~\ref{lem:codex}, we discovered that
Bar\'at et al.\ \cite{Bar-Haj-Lin-Yan-12} had previously proved this
fact while working to simplify the characterization of graphs of
pathwidth $2$. They used the name \emph{track} for what we call a
parallel nearly outerplanar graph. We use the latter name because it
is more evocative of the aspects of the structure that are important
in our proof and include the proof of Lemma~\ref{lem:codex} for
completeness.

By Lemma~\ref{lem:codex}, each $2$-connected block of a graph of
pathwidth two is a parallel nearly outerplanar graph. Our next lemma
establishes that the vertices where these blocks join together lie on
the parallel nearly outerplanar graphs' longest cycles.

\begin{lemma}\label{lem:long-cycle}
  Let\/ $G$ be a connected graph that does not contain an element of\/ 
  $\mathcal{F}$ as a minor. Let\/ $B$ be a\/ 
  $2$-connected block of\/ $G$. There exists a longest cycle\/ $C$ of\/ $B$
  such that if there is a vertex $v$ of $B$ adjacent
  to a vertex\/ $v'$ not in\/ $B$, then\/ $v\in V(C)$.
\end{lemma}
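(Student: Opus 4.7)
By Lemma~\ref{lem:codex}, the block $B$ is parallel nearly outerplanar. Write $A\subseteq V(B)$ for the set of \emph{attachment} vertices of $B$ (those with a neighbor in $V(G)\setminus V(B)$) and $\mathcal{L}$ for the set of longest cycles of $B$. My plan is to pick $C\in\mathcal{L}$ so as to maximize $|V(C)\cap A|$, then argue by contradiction that $V(C)\supseteq A$. Suppose some $v\in A$ lies off $C$. By the parallel nearly outerplanar structure induced by $C$ (using the post-Lemma~\ref{lem:codex} observation that we may take any longest cycle as the bounding one), the vertex $v$ must be the unique internal vertex of a subdivided chord $xvy$ with $x,y\in V(C)$, and the two $xy$-arcs $C_1,C_2$ of $C$ each have length at least $2$ since $C$ is longest.

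\textbf{Length-$2$ arc and swap.} The first key step is to show that at least one of $|C_1|,|C_2|$ equals $2$. If not, letting $v'$ be a neighbor of $v$ outside $B$ and contracting the component of $G\setminus V(B)$ that contains $v'$ to a single vertex produces, as a minor of $G$, the theta graph on three $xy$-paths of lengths $|C_1|,|C_2|,2$ equipped with a pendant at $v$. I would show this minor has pathwidth $3$, so it contains a member of $\mathcal{F}$ as a minor, contradicting $\mathcal{F}$-minor-freeness. Without loss of generality $|C_1|=2$ with interior vertex $u$; then $C':=(xvy)\cup C_2$ is another longest cycle with $V(C')=(V(C)\setminus\{u\})\cup\{v\}$. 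If $u\notin A$, this contradicts maximality of $|V(C)\cap A|$, so it remains to rule out $u\in A$.

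\textbf{Eliminating $u\in A$.} Assume $u\in A$, and let $K_u,K_v$ be the components of $G\setminus V(B)$ containing external neighbors of $u,v$. If $K_u=K_v$, contracting that single component together with the arc $C_2$ exhibits $K_4$ as a minor on $\{x,y,u,v\}$, a contradiction. If $K_u\neq K_v$ and $|C_2|\ge 3$, contracting $K_u,K_v$ separately yields the theta$(2,2,|C_2|)$-plus-two-pendants graph as a minor, again of pathwidth $3$. When $|C_2|=2$, with interior vertex $w$, the block $B$ contains $K_{2,3}$ on $\{x,y,u,v,w\}$; the cycle $(xuy)\cup(xvy)\in\mathcal{L}$ then improves $|V(C)\cap A|$ (by adding $v$ and losing only $w$) unless $w\in A$ as well, in which case a short case analysis on the three components associated with $u,v,w$ produces either $K_4$ (if any two coincide) or the $K_{2,3}$-plus-three-pendants minor (pathwidth $3$). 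Each branch yields a forbidden minor, contradicting the hypothesis and completing the proof.

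\textbf{The hard part.} The main obstacle will be verifying the pathwidth-$3$ claims for the several small auxiliary graphs that arise: theta$(a,b,2)$ plus a pendant at the middle of the length-$2$ path (for $a,b\ge 3$), theta$(2,2,c)$ plus two pendants at the middles of the length-$2$ paths (for $c\ge 3$), and $K_{2,3}$ plus three pendants at the degree-$2$ vertices. I expect to handle these either by naming an explicit element of $\mathcal{F}$ appearing as a minor via the Kinnersley--Langston list, or by a direct obstruction argument: in each graph the two cycle vertices $x,y$ of degree $3$ must each persist through many bags of any path decomposition, and the pendant placements then force a bag of size at least $4$. The case analysis in the final step---arising from the fact that the swap may trade one off-cycle attachment vertex for another---is intricate but each branch is forced.
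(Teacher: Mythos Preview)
Your overall strategy is essentially the paper's: choose a longest cycle optimally (your criterion ``maximize $|V(C)\cap A|$'' is equivalent to the paper's ``minimize the number of ear-interior vertices that are attachment points,'' since $|A\setminus V(C)|=|A|-|V(C)\cap A|$), then run the same case split on the lengths of the two $xy$-arcs and on whether the swapped-out vertex is itself an attachment point. The three auxiliary graphs you isolate are exactly the ones the paper uses, and the paper names them as $T_3$, $T_4$, $T_5$ respectively.

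There is, however, one genuine logical gap. You write ``I would show this minor has pathwidth $3$, so it contains a member of $\mathcal{F}$ as a minor,'' and you list a direct bag-size obstruction argument as one way to finish. That inference is invalid under the lemma's hypothesis: $G$ is only assumed to be $\mathcal{F}$-minor free, where $\mathcal{F}=\{K_4,T_1,\dots,T_5\}$ is just six of the $110$ minimal obstructions for pathwidth~$2$. A graph of pathwidth~$3$ may well contain one of the other $104$ obstructions and none of the six in~$\mathcal{F}$, so establishing pathwidth~$3$ for your auxiliary graphs yields no contradiction. You must carry out the first of your two alternatives---explicitly exhibit an element of $\mathcal{F}$ as a minor---and this is precisely what the paper does: the theta$(\ge 3,\ge 3,2)$-plus-pendant contains $T_3$; theta$(2,2,\ge 3)$ with two pendants contains $T_4$; and $K_{2,3}$ with three pendants contains $T_5$ (with $K_4$ appearing first whenever two of the external components coincide, as you already noted). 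Once you commit to naming these minors rather than arguing pathwidth, your proof goes through and matches the paper's.
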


\begin{proof}
  Let $C$ be a longest cycle of $B$ that minimizes the number of
  internal vertices of ears adjacent to vertices outside $B$. Let $v$
  be an internal vertex of an ear $xvy$, i.e., $v\not\in V(C)$,  and suppose $v$ is adjacent a
  vertex $v'$ not in $B$. 
  Deletion of $x$ and $y$ from the cycle $C$ leaves two paths, which
  we will call $C_1$ and $C_2$. If both $C_1$ and $C_2$ contain at
  least two vertices, then $G$ has a $T_3$-minor, since we have
  assumed that $v$ has a neighbor $v'$ not in $B$. Thus, suppose $C_1$
  contains a single vertex $u$. If the degree of $u$ in $G$ is two,
  then the cycle formed from $C$ by replacing $u$ by $v$ is also a
  longest cycle of $B$, and has fewer internal vertices of ears
  adjacent to vertices outside $B$. If the degree of $u$ in $B$ were
  $3$, then there would be an ear $uzx$ or $uzy$. In either case, $C$
  would not be a longest cycle, as the edge $ux$ (or $uy$) could be
  replaced by the path $uzx$ (or $uzy$). Therefore, we may assume that
  $u$ is adjacent to a vertex $u'$ not in $B$. Furthermore, $u'\neq
  v'$, and there is no path from $u'$ to $v'$ in $G$ that does not go
  through $B$, as otherwise $G$ would contain a $K_4$-minor. If $C_2$
  contains at least two vertices, then $G$ contains a $T_4$-minor. If
  $C_2$ is a single vertex $w$, then it must have degree $2$ in $G$ to
  avoid having $T_5$ as a minor. But then $\{x,v,y,u\}$ is a longest
  cycle of $B$ with fewer internal vertices of ears adjacent to
  vertices outside $B$ than $C$.
\end{proof}

In light of Lemma~\ref{lem:long-cycle}, we see that every
$\mathcal{F}$-minor free graph $G$ has a planar embedding in which each
$2$-connected block $B$ is embedded such that the vertices of $B$
lying on the unbounded face form a longest cycle of
$B$. We call such an embedding a \emph{canonical embedding} of $G$.

\section{Posets with Cover Graphs of Pathwidth $2$}
\begin{definition}
  Let $P$ be a poset. A \emph{subdivision} of the cover relation $x<:y$
  in $P$ is the addition of new points $z_1,z_2,\ldots,z_l$ such that
  $x<z_1<\dots<z_l<y$ and the new points $z_i$ are incomparable with
  all points of $P$ that are not greater than $y$ or less than $x$. We
  say that $Q$ is a \emph{subdivision} of $P$ if $Q$ can be
  constructed from $P$ by subdividing some of its cover relations.
\end{definition}

In light of what we know from the previous section about the structure
of graphs of pathwidth at most $2$, it is tempting to consider the
effect of subdivision on dimension. Since such an approach would allow
us to deal with some of the subdivided chords preventing the cover
graph from being outerplanar, we might be inclined to hope that if $Q$
is a subdivision of $P$, then $\dim(Q)\leq c\dim(P)$ for some absolute
constant $c$.  (Perhaps even $c=2$.) However, this is not the case. In
fact, Spinrad showed in \cite{spinrad:subdiv} that this construction
can increase dimension by an arbitrarily large factor. Fortunately, as
we show in Lemma~\ref{lem:subdivision}, there is a subdivision-like
operation on the graphs of relevance to our result that has a small
effect on the poset's dimension.

Our proof requires that we first introduce some additional
terminology. Let $G$ be a parallel nearly outerplanar graph that is
the cover graph of a poset $P$, and let $C$ be a longest cycle
provided by Lemma~\ref{lem:long-cycle}. An ear with no inner vertex is
simply called a \emph{chord}. We call an ear $xzy$ \emph{unidirected}
if $x<z<y$ or $y<z<x$ in $P$. Otherwise we call the ear a
\emph{beak}. An \emph{upbeak} is an ear with $x<z>y$ in $P$, and a
\emph{downbeak} is an ear with $x>z<y$ in $P$. (In either case,
$x\parallel y$.) We call the internal point of a beak a \emph{beak
  peak}. Our first step will be to address unidirected ears. We will
then turn our attention to the issue of beaks.

\begin{lemma}\label{lem:subdivision}
  Let\/ $P$ be a poset with cover graph\/ $G$. Suppose that\/ $G$ is\/
  $\mathcal{F}$-minor free and fix a canonical embedding of\/ $G$ in the
  plane. If\/ $Z$ is the collection of points that are not on the
  unbounded face of\/ $G$ and are neither minimal nor maximal in\/ $P$,
  then\/ $\dim(P) \leq 2 \dim(P-Z) + 1$.
\end{lemma}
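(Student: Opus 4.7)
The plan is to fix a realizer $L_1,\dots,L_d$ of $P-Z$, where $d=\dim(P-Z)$, and to build a realizer of $P$ of size $2d+1$. For each index $i$ I will produce two linear extensions $L_i^{\uparrow}$ and $L_i^{\downarrow}$ of $P$ from $L_i$ by inserting every $z\in Z$ at its latest (respectively earliest) feasible position, and then augment with one further linear extension $L^{*}$ built using the structure of the cover graph.

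By Lemmas~\ref{lem:codex} and~\ref{lem:long-cycle}, every point of $G$ not on the unbounded face of the canonical embedding is the internal vertex of a subdivided chord of some $2$-connected block. A beak peak has both of its cover-graph neighbours on the same side of itself in $P$, hence is minimal or maximal, so every $z\in Z$ is the internal vertex of a unidirected ear $x<:z<:y$. Such a $z$ has degree $2$ in $G$ and all of its $P$-comparabilities route through $x$ and $y$; in particular every linear extension of $P$ places $z$ strictly between $x$ and $y$, which makes the two insertion rules well defined.

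The heart of the verification is to show that $L_1^{\uparrow},L_1^{\downarrow},\dots,L_d^{\uparrow},L_d^{\downarrow}$ already reverse every incomparable pair of $P$ except pairs of the form $(z_1,z_2)$ with both in $Z$ and ear endpoints satisfying $x_1\le_P x_2$ and $y_1\le_P y_2$ (where the ear of $z_k$ is $x_k<:z_k<:y_k$). Pairs disjoint from $Z$ are reversed because $L_i$ already reverses them in $P-Z$. A mixed pair $(z,b)$ with $b\notin Z$ and $x<b<y$ in $P$ is split by the two insertion rules since they place $z$ on opposite sides of $b$; in the remaining mixed configurations at least one of $(x,b)$ or $(y,b)$ is an incomparable pair of $P-Z$, and the reversal of that pair in some $L_i$ transfers to $(z,b)$ in $L_i^{\uparrow}$ or $L_i^{\downarrow}$. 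For a pair $(z_1,z_2)$ with both in $Z$, the $\uparrow$-insertion orders them by the relative position of $y_1,y_2$ in $L_i$ and the $\downarrow$-insertion by that of $x_1,x_2$; this covers both directions unless $x_1\le_P x_2$ and $y_1\le_P y_2$.

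The role of $L^{*}$ is to reverse each such remaining pair at once, by forcing $z_2<z_1$ in $L^{*}$. For any single problematic pair this is feasible since $z_1\parallel z_2$ forces $y_1\not\le_P x_2$, so the chain $x_2<z_2<z_1<y_1$ is consistent with $P$ locally. The hard part, and the main obstacle of the proof, is producing a single $L^{*}$ that accomplishes this for all problematic pairs simultaneously, which reduces to showing that $P-Z$ together with the collection of relations $x_2\prec y_1$ taken over all problematic pairs remains acyclic. For this I would use that two ears sitting in a common parallel nearly outerplanar block have monotonically ordered attachment points (Lemma~\ref{lem:codex}) and that the blocks of $G$ are assembled into a tree via cut vertices lying on longest cycles (Lemma~\ref{lem:long-cycle}), to rule out cyclic chains of forced inequalities; after a finite tie-break among ears sharing both endpoints, $L^{*}$ is then any linear extension of the augmented poset with each $z\in Z$ inserted so as to realise $z_2<z_1$ for every problematic pair.
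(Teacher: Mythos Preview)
Your overall strategy matches the paper's exactly: build $2d$ linear extensions from a realizer of $P-Z$ by pushing each $z\in Z$ toward its upper or lower endpoint, identify the residual ``bad diamond'' pairs (both in $Z$, with $\ell_a<\ell_b$ and $u_a<u_b$), and handle those with one extra linear extension. The paper's $L_i',L_i''$ are your $L_i^{\downarrow},L_i^{\uparrow}$, and the paper also specifies an explicit tie-break when several $z$'s share an endpoint, which you gloss over; that omission is minor.

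The genuine gap is in your construction of $L^{*}$. You assert that it ``reduces to showing that $P-Z$ together with the collection of relations $x_2\prec y_1$ taken over all problematic pairs remains acyclic,'' and that once this holds you may extend linearly and then insert the $z$'s so that $z_2<z_1$ for every problematic pair. The first statement is only a \emph{necessary} condition; it is not sufficient. Concretely, suppose $z_a,z_b,z_c\in Z$ with $x_c<x_b<x_a$ and $y_c<y_b<y_a$ in $P$, with $y_c\le_P x_a$ (so $z_c<_P z_a$) but $y_b\not\le_P x_a$ and $y_c\not\le_P x_b$ (so $z_a\parallel z_b$ and $z_b\parallel z_c$). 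Then $(z_b,z_a)$ and $(z_c,z_b)$ are problematic, and your augmented $P-Z$ only acquires $x_a<y_b$ and $x_b<y_c$, which is perfectly consistent. Yet any $L^{*}$ you seek must satisfy $z_a<z_b<z_c$, contradicting $z_c<_P z_a$; no insertion exists. Such a configuration is in fact excluded by the $\mathcal F$-minor-free hypothesis, but your reduction does not see this.

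The paper instead works directly with the digraph $D'$ obtained from the Hasse digraph of $P$ by adding, for each bad diamond $(a,b)$, the arc $b\to a$, and proves $D'$ is acyclic. This is the correct target, and its proof is where all of the geometry is used: one shows that for a new arc $ba$ the lower endpoints $\ell_a,\ell_b$ lie on the same side of the outer cycle (and likewise the uppers), that consecutive new arcs are oriented the same way (both left or both right), and then derives a topological-separation contradiction from a putative shortest directed cycle in $D'$. Your one-sentence appeal to ``monotonically ordered attachment points'' and the block tree is not yet an argument at this level of detail, and in any case it is aimed at the wrong acyclicity statement.
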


\begin{proof}
  First notice that in a canonical embedding of $G$, our definition of $Z$ means that every element of
  $Z$ is the internal vertex of a unidirected ear $\ell<z<u$ in $P$. If
  the relation $\ell<u$ in $P-Z$ is a cover, then $z$ is a subdividing
  point of the cover relation $\ell<:u$ in $P$. Note, however, that $P$
  is not necessarily a subdivision of $P-Z$, as some of the
  unidirected ears may not correspond to cover relations in
  $P-Z$. Nevertheless, we will refer to $Z$ as the set of subdividing
  points of $P$ and an element of $Z$ will be called a subdividing
  point of $P$ even if the comparability involved is not a cover of
  $P$. When $\ell zu$ is a unidirected ear of $P$ with $\ell<z<u$ in $P$, we
  will refer to $\ell$ as the lower element of $z$. Similarly, $u$ will
  be called the upper element of $z$.

  Let $\{L_1,\dots,L_d\}$ be a realizer of $P-Z$ with
  $d=\dim(P-Z)$. For each $L_i$, we will construct two linear
  extensions $L_i'$ and $L_i''$ of $P$ by inserting the subdividing
  elements appropriately, and we will show that most incomparable
  pairs will be reversed in one of these linear extensions. We will
  create one extra linear extension to reverse the rest of the
  incomparable pairs.

  To construct $L_i'$, we place each subdividing point of $P$
  immediately above its lower element in $L_i$. We form $L_i''$ by
  placing each subdividing point immediately below its upper
  element in $L_i$. There may be some ambiguity in this definition if
  subdividing points share upper or lower elements. To deal with such
  situations, let $z_1,\ldots,z_k$ be subdividing points of $P$ that
  share the lower element $\ell$. For $j=1,\dots,k$, let the upper
  element of $z_j$ be $u_j$.  We may assume that these upper elements
  are distinct, since the removal of one point of a pair of points
  with duplicated holdings does not impact dimension (other than in
  the irrelevant case of a two-element antichain). Let $\sigma$ be a
  permutation of $\set{1,\dots,k}$ such that $u_{\sigma(1)}<\cdots<
  u_{\sigma(k)}$ in $L_i$. In $L_i'$ we insert the subdividing points
  so that $\ell< z_{\sigma(k)}<\cdots< z_{\sigma(1)}$. For $L_i''$, our
  concern is with subdividing points $z_1,\dots,z_k$ sharing the upper
  element $u$. Let $\ell_j$ be the lower element of $z_j$, and let
  $\sigma$ be a permutation of $\set{1,\dots,k}$ such that
  $\ell_{\sigma(1)}<\cdots< \ell_{\sigma(k)}$ in $L_i$. To form $L_i''$, we
  insert the subdividing elements so that $ z_{\sigma(k)}<\cdots<
  z_{\sigma(1)}<u$ in $L_i''$.

  Consider an incomparable pair $(a,b)$. If $a,b\in P-Z$, then obviously
  there is a linear extension $L_i'$ (and an $L_i''$) with $a>b$. Suppose
  $a\in P-Z$ and $b\in Z$ and let $\ell$ be the lower element of
  $b$. Then $a\not<\ell$ in $P-Z$ implies that there in an $L_i$ in which
  $a>\ell$, and hence $a>b$ in $L_i'$. Similarly, if $a\in Z$ and $b\in
  P-Z$, there exists an $L_i''$ with $a>b$.

  If $a,b\in Z$ have the same lower element, then their order in
  $L_i'$ will be opposite to their order in $L_i''$. Hence, one of
  $L_i'$ and $L_i''$ has $a>b$.  A similar argument works when $a$ and
  $b$ have the same upper element.

  Next we assume that $a,b\in Z$ have distinct upper and lower
  elements. Specifically, let $\ell_a$ and $u_a$ be the lower and
  upper elments of $a$ and let $\ell_b$ and $u_b$ be the
  lower and upper elements of $b$. If
  $\ell_a\not< \ell_b$, then $\ell_a>\ell_b$ in some $L_i$, and hence $a>b$ in
  $L_i'$. Similarly, if $u_a\not<u_b$, then $a>b$ in some $L_i''$.

  At this stage, we have shown that the incomparable pair $(a,b)$ will
  be reversed, unless all of the following conditions are satisfied:
  \begin{enumerate}
  \item $a,b\in Z$;
  \item $a$ and $b$ have distinct lower elements $\ell_a$ and $\ell_b$, respectively, and distinct upper elements $u_a$ and $u_b$, respectively; and
  \item $\ell_a<\ell_b$ and $u_a<u_b$.
  \end{enumerate}
  We say such a pair $(a,b)$ is \emph{in a bad diamond}. We will prove
  that there exists a single linear extension that reverses all such
  pairs.
  
  We do this by viewing the poset $P$ as an acyclic directed graph
  $D$, with directed edges corresponding to covers and pointing from
  smaller elements to larger elements.  For each incomparable pair
  $(a,b)$ in a bad diamond, we introduce a new directed edge $ba$. We
  call these \emph{new edges}, and the directed graph formed from $D$
  by adding these new edges is denoted by $D'$. Note that $a$ and $b$ must lie in the same $2$-connected block, so the new edge $ba$ will be added to within that block.

  The goal of the rest of the argument is to prove that $D'$ contains no
  oriented cycles.  Recall that we have fixed a canonical embedding of $D$ in the plane, which defines
  (up to duality) a natural linear order on the subdividing points. We fix one
  of these orders and use the terms ``left'' and ``right'' to refer to
  directions in this linear order. For upper and lower elements of the
  subdivided chords there is also a natural notion of two sides of the outer
  cycle defined by the embedding, depending on whether they are $x_i$'s or $y_j$'s. (This notion is well-defined, since we are concerned only with attachment points of subdivided chords.)
 
  \begin{claim}\label{claim:separation}
  Let $ba$ be a new edge. Then there is a directed path $P_{\ell}$ from $\ell_a$ to
  $\ell_b$, and a directed path $P_u$ from $u_a$ to $u_b$ in $D$, and for any such
  directed paths we have $P_{\ell}\cap P_u=\emptyset$, and in particular,
  $u_a,u_b\not\in P_{\ell}$ and $\ell_a,\ell_b\not\in P_u$.
  \end{claim}

  \begin{proof}
  The existence of the paths follows from condition (3) of the definition of
  bad diamonds.  If there exists $x \in P_{\ell} \cap P_u$, then we
  have that $a < u_a \leq x \leq \ell_b < b$, a contradiction.
  \end{proof}
  
  \begin{claim}\label{claim:sameside}
  Let $ba$ be a new edge. Then $\ell_a$ and $\ell_b$ are on the
  same side of the outer cycle, and $u_a$ and $u_b$ are also on the same side.
  Furthermore, $P_{\ell}$ and $P_u$ are on the outer cycle.
  \end{claim}

  \begin{proof}
  This is direct consequence of Claim~\ref{claim:separation}.
  If any part of the statement is not true, then $P_{\ell}$ topologically separates
  $u_a$ from $u_b$ or $P_u$ topologically separates $\ell_a$ from $\ell_b$.
  \end{proof}

  \begin{claim}\label{claim:orient}
  Let $cb$ and $ba$ be two new edges. Then they both go left, or both go right.
  \end{claim}

  \begin{proof}
  Without loss of generality assume for a contradiction that $ba$ goes left,
  and $cb$ goes right.  By Claim~\ref{claim:sameside}, all of $\ell_a$, $\ell_b$,
  $\ell_c$ are on the same side, and $u_a$, $u_b$, $u_c$ are on the same side.
  Furthermore, every directed path $P_{ab}$ from $\ell_a$ to $\ell_b$ goes on the
  outer cycle; a similar statement holds for paths $P_{bc}$ from $\ell_b$ to $\ell_c$.
  However, one of these is a subpath of the other, and they are directed
  contradictorily. 
  \end{proof}
  
  Now we are ready to show that $D'$ does not contain a directed cycle. Suppose
  for a contradiction that it does, and let $C$ be a directed cycle in $D'$
  that contains as few new edges as possible. Notice that $C$ must contain at
  least one new edge and at least one old edge by Claim~\ref{claim:orient}. Let $P_1$ be a maximal path in
  $C$ that consists entirely of new edges. Suppose that $P_1$'s initial point
  is $b$ and its terminal point is $a$. Notice that $C$ must lie entirely
  within a $2$-connected block of $D$, and this block is parallel nearly
  outerplanar. Also notice that $C$ must include the edges $au_a$ and $\ell_bb$,
  and a directed path $P_2$ from $u_a$ to $\ell_b$ that is disjoint
  from $P_1$.  For any $x, y \in P_2$ denote by $xP_2y$ the subpath of
  $P_2$ starting with $x$ and terminating with $y$.
  If $\ell_a\in P_2$, then the directed cycle $\ell_aa(u_aP_2\ell_a)$ contains fewer new
  edges than $C$; if $u_b\in P_2$, then $\ell_bb(u_bP_2\ell_b)$ is such a cycle.

  Therefore $P_2$ connects the unidirected ears $\ell_a a u_a$ and $\ell_b b u_b$. Hence
  $P_2$ must cross from the side of $u_a$ to the side of $\ell_b$.
  This must occur via a chord or a unidirected ear.  Let $u_0$ be the
  attachment point for the chord or unidirected ear on the same side as $u_a$ and let $\ell_0$ be the
  attachment point on the same side as $\ell_a$.   As all the
  new edges which form $P_1$ are all consistently oriented, this
  crossing
  occurs between some $a'$ and $b'$ which are consecutive vertices on
  $P_1$.  Since $b'a'$ is a new edge, we have that $(a',b')$ is in a bad
  diamond and in particular, $a'$ is incomparable to $b'$.  However,
  by Claim~\ref{claim:sameside} and the definition of a bad diamond, we
  have that $a' < u_{a'} \leq u_0 < \ell_0 \leq \ell_{b'} < b'$, a contradiction.

  Since $D'$ is acyclic, there is a total order $L_0$ on its vertices
  that respects the orientation of its edges. By construction, $L_0$
  is then a linear extension of $P$ that reverses all incomparable
  pairs that are in bad diamonds. Therefore, we can conclude that
  $\set{L_0,L_1',\dots,L_d',L_1'',\dots,L_d''}$ is a realizer of $P$
  and $\dim(P)\leq 2\dim(P-Z)+1$.
\end{proof}

To address the case of beaks in the cover graph, we will form two
extensions of the poset and show that their intersection is
$P-Z$. (Recall that $Z$ is the set of vertices that, in a canonical
embedding of $G$, are not on the unbounded face and are not beak
peaks.) We will then apply Lemma~\ref{lem:subdivision} to $P$ and use
what we know about the extensions of $P-Z$ to bound its
dimension. Note that in the remainder of this section, we often view
the poset as a directed graph and refer to a chain of covers as a
directed path.

\begin{lemma}\label{lem:up-down}
  Let\/ $P$ be a poset with cover graph\/ $G$. If\/ $G$ is\/
  $\mathcal{F}$-minor free, then\/ $P$ has extensions\/ $\Up$ and\/
  $\Down$ with\/ cover graphs\/ $G_\Up$ and\/
  $G_\Down$ that are outerplanar except for some chords replaced by
  directed paths of length $2$.
\end{lemma}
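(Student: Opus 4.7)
The plan is to construct $\Up$ and $\Down$ by adding, for each beak in the canonical embedding of $G$ given by Lemma~\ref{lem:long-cycle}, one comparability between its two attachment points on the longest cycle of its block. The only ears that prevent the cover graph from already having the desired form are the beaks, whose internal vertex $z$ (a beak peak) keeps the ear from being a directed path of length two; unidirected ears are already of the desired form, and ordinary chords lie on the outer face in the canonical embedding.

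Fix a $2$-connected block $B$ with longest cycle $C_B$. By Lemma~\ref{lem:codex}, $B$ is a parallel nearly outerplanar graph, so every ear of $B$ has one attachment point on the ``$x$-side'' and one on the ``$y$-side'' of $C_B$, and these attachment points come equipped with a canonical linear order along each side. For each beak $xzy$ we collapse $z$ as follows. For an upbeak $x < z > y$ we add $x < y$ in $\Up$, producing the chain $x < y < z$ (the edge $xz$ disappears from the cover graph because $y$ now sits strictly between $x$ and $z$), and we add $y < x$ in $\Down$, producing $y < x < z$ (removing the edge $yz$). Downbeaks are handled symmetrically. After performing these additions for every beak in every block, each beak peak $z$ has exactly one neighbor in the new cover graph, so the pair $\{x,z\}$ (or $\{y,z\}$) becomes a trivial pendant block; the original block $B$ loses the two ear edges incident to $z$ and gains the new chord $xy$, so it is again a parallel nearly outerplanar graph whose only subdivided chords correspond to the original unidirected ears. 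Gluing over cut vertices then yields cover graphs $G_\Up$ and $G_\Down$ that are outerplanar except for some chords replaced by directed paths of length two, as required.

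The main obstacle is verifying that $\Up$ and $\Down$ are genuine poset extensions, i.e.\ that the simultaneously added comparabilities do not introduce a directed cycle. Within a single block this should follow from the canonical nested ordering of the ear attachment points supplied by Lemma~\ref{lem:codex}: any directed cycle combining new edges with chains of original covers would arrange attachment points along $C_B$ in an order incompatible with the parallel nearly outerplanar structure, and we can either rule this out directly or extract a corresponding minor from $\mathcal{F}$. Across blocks, directed cycles cannot occur because distinct blocks share at most one vertex, so no directed cycle can visit more than one block. Once acyclicity is in hand, the structural claim about $G_\Up$ and $G_\Down$ is a direct consequence of the block-by-block analysis above.
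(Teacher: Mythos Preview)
Your overall plan matches the paper's: add a comparability between the two attachment points of every beak, block by block, so that each beak peak becomes a pendant and the block reduces to an outerplanar graph plus the surviving unidirected ears. The cross-block remark (distinct blocks meet in at most one vertex, so no directed cycle can use two blocks) is exactly what the paper uses.

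There are two genuine gaps.

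First, the construction is ambiguous and, under the most natural reading of ``downbeaks are handled symmetrically,'' it is wrong. If for an upbeak you add $x<y$ in $\Up$ but for a downbeak you add $y<x$ in $\Up$ (so that the direction depends on the \emph{type} of the beak), then $\Up$ need not be a partial order. Take the $10$-cycle $x_0x_1x_2x_3x_4y_4y_3y_2y_1y_0$ with covers $x_4<:x_3<:x_2<:x_1<:x_0$, $y_0<:y_1<:y_2<:y_3<:y_4$, $x_4<:y_4$, $y_0<:x_0$, together with an upbeak $x_1<:z_1:> y_1$ and a downbeak $x_3:> z_2<:y_3$. This is a parallel nearly outerplanar cover graph, but adding $x_1<y_1$ and $y_3<x_3$ creates the directed cycle $x_1\to y_1\to y_2\to y_3\to x_3\to x_2\to x_1$. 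The paper's construction avoids this by orienting \emph{every} new edge the same way with respect to the two sides of the outer cycle: all beaks get $x_i<y_i$ in $\Up$ and $y_i<x_i$ in $\Down$, independent of whether the beak is up or down.

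Second, and more importantly, you identify acyclicity as ``the main obstacle'' but do not prove it; saying it ``should follow from the canonical nested ordering'' or that one can ``extract a corresponding minor'' is not an argument. The paper's proof is short once the construction is side-based: form the digraph $D'$ from the cover digraph of the block by adding all arcs $x_i\to y_i$. A directed cycle $C'$ in $D'$ must use at least two new arcs $x_iy_i$ and $x_jy_j$ (one new arc alone would force $y_i<x_i$ in $P$, contradicting that $x_iy_i$ is a beak). Since every new arc goes from the $x$-side to the $y$-side, $C'$ contains a directed subpath from $y_i$ to $x_j$ lying in $D$; this path, together with the two beak ears, topologically separates $x_i$ from $y_j$ in the plane, so $C'$ cannot close up. This planarity/separation step is the actual content of the lemma and needs to be written out.
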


\begin{proof}
  Fix a canonical embedding of $G$. To construct $\Up$ and $\Down$, we
  consider the $2$-connected blocks of the cover graph of $P$ one at a
  time. In each block, we consider the beaks $xzy$ and introduce a
  comparability between $x$ and $y$. It is clear that if we are able to do
  this, beaks in $G$ will become edges in $G_\Up$ and $G_\Down$ and a
  pendant vertex (corresponding to the beak peak) will be added to one
  of the beak attachment points. Thus, the only obstruction to $G_\Up$
  and $G_\Down$ being outerplanar will come from unidirected ears,
  corresponding to replacing chords of an outerplanar graph by
  directed paths of length $2$.

  We introduce comparabilities between beak attachment points for all beaks
  in such a way that we maintain consistency of these new
  comparabilities. Since two blocks intersect in at most one point on
  their longest cycles, introducing a new comparability within one
  block cannot force two incomparable beak attachment points in
  another block to become comparable by transitivity.  Therefore, we
  may define the extensions on the blocks independently.

    Consider a 2-connected block $B$. Since $B$ is parallel nearly
    outerplanar, a fixed plane embedding provides (up to duality) a
    natural left-to-right ordering on its beaks as suggested in
    Figure~\ref{F:codex}. Fix one of these orders and number the $k$
    beaks of $B$ accordingly from $1$ to $k$. Denote the attachment
    points for beak $i$ by $x_i$ and $y_i$, with the $x_i$ all lying
    on the same side of the outer cycle of $B$ and the $y_i$ lying on
    the other.

    We now show that there exists an extension of the subposet induced
    by the vertices of $B$ in which $x_i<y_i$ for all $i=1,\ldots,k$.
    Let $D$ be the digraph defined by the subposet induced by the
    vertices of $B$.  More specifically, $V(D)=V(B)$ and there is a
    directed edge $uv$ in $D$ if and only if $(u,v)$ is a cover in
    $P$. To prove that  such an extension exists, it suffices to show that
    if we construct $D'$ by adding the directed edges $x_i y_i$ to
    $D$, then $D'$ contains no directed cycle. By a slight abuse of
    terminology, we will call the added directed edge $x_i y_i$ a
    beak.

    Suppose for a contradiction that $D'$ contains a directed cycle
    $C'$. Notice that $C'$ must contain at least one beak, because $D$
    is an acyclic graph. In fact, $C'$ has to contain at least two
    beaks, for if the only beak it contains were $x_i y_i$, then
    $y_i<x_i$ in $P$, which would contradict the fact that $x_iy_i$ is
    a beak. Therefore, $C'$ contains the beaks $x_i y_i$ and $x_j
    y_j$. As a consequence, $C'$ must contain a directed path between
    $y_i$ and $x_j$. This path forces $x_i$ and $y_j$ to belong to
    different (topological) regions, contradicting the existence of
    $C'$ as a directed cycle.  

    By a symmetric argument, there exists an extension of the subposet
    induced by the vertices of $B$ in which
    $y_i<x_i$ for all $i=1,\ldots,k$.

    Now we can define the extensions $\Up$ and $\Down$ of $P$. In a
    given embedding with left-right orientations of the 2-connected
    blocks, construct $\Up$ by adding, for each block, the relations
    $x_i<y_i$ for all $i$. Similarly, construct $\Down$ by adding the
    relations $y_i<x_i$ for all $i$ in each block.
\end{proof}

The final major step in our argument is to prove that
$P=\Up\cap\Down$, as then we may use realizers of $\Up$ and $\Down$ to
construct a realizer of $P$, thereby bounding the dimension.

\begin{lemma}\label{lem:p-is-up-down}
  Let\/ $P$ be a poset with\/ $\mathcal{F}$-minor-free cover
  graph. If\/ $\Up$ and\/ $\Down$ are extensions of\/ $P$ as defined
  in the proof of Lemma~\ref{lem:up-down}, then\/ $P = \Up\cap\Down$.
\end{lemma}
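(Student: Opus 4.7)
The inclusion $P \subseteq \Up \cap \Down$ is immediate since both $\Up$ and $\Down$ are extensions of $P$. For the converse, suppose for contradiction that there is an incomparable pair $(u, w)$ of $P$ with $u <_\Up w$ and $u <_\Down w$. Since the added beak comparabilities defining $\Up$ and $\Down$ are local to individual $2$-connected blocks, cut vertices of the cover graph remain cut vertices after these additions, so any $\Up$- or $\Down$-chain from $u$ to $w$ must pass through every cut vertex separating them in the block tree. Inducting on block-tree distance reduces to the case where $u$ and $w$ lie in a single $2$-connected block $B$, which by Lemma~\ref{lem:codex} is parallel nearly outerplanar. A witness that $u <_\Up w$ decomposes as alternating $P$-directed paths and added beak edges $x_i \to y_i$, and similarly for $\Down$ with edges $y_i \to x_i$. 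My plan is to induct on the combined number $r + s$ of beak edges used in a shortest $\Up$-witness together with a shortest $\Down$-witness.

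Write the $\Up$-witness as $u \leq_P x_{i_1}$, $y_{i_1} \leq_P x_{i_2}$, $\ldots$, $y_{i_r} \leq_P w$, and the $\Down$-witness analogously with beaks $j_1, \ldots, j_s$. If $r = 0$ or $s = 0$ then $u \leq_P w$, contradicting $u \parallel_P w$, so $r, s \geq 1$. Suppose first that some beak index $k$ appears in both witnesses, as the $m$-th beak in the $\Up$-witness and the $n$-th in the $\Down$-witness. Then $(u, x_k)$ is witnessed in $\Up$ using $m - 1$ beaks (the prefix up to $x_k$) and in $\Down$ using $n$ beaks (the prefix leading to $y_k$ followed by the beak edge $y_k \to x_k$), for a combined count at most $r + s - 1$; by the inductive hypothesis $(u, x_k) \in P$. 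A symmetric argument applied to the suffixes, whose combined beak count is $r + s - m - n + 1 < r + s$, gives $(x_k, w) \in P$. Concatenating, $u \leq_P x_k \leq_P w$, contradicting $u \parallel_P w$.

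The main obstacle is the remaining case, in which the beak index sets $\{i_1, \ldots, i_r\}$ and $\{j_1, \ldots, j_s\}$ are disjoint. Here one must exploit the non-crossing, linear structure of beaks in a parallel nearly outerplanar block. The $\Up$-witness uses beak edges going from the $x$-side to the $y$-side of the outer cycle of $B$, interleaved with $P$-directed segments that must cross back from the $y$-side to the $x$-side via chords, unidirected ears, or around the outer cycle; the $\Down$-witness does the opposite. Following the spirit of the topological argument in the proof of Lemma~\ref{lem:up-down}, I would show that such interleaved crossings on disjoint beak sets must either force a common beak to appear in both witnesses (reducing to the previous case), produce a $P$-directed path from $u$ to $w$ directly, or create a configuration violating the non-crossing chord ordering of the PNO block. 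This planarity and orientation case analysis is the technical heart of the proof and is where I expect the most delicate work.
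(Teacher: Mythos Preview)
Your proposal is incomplete: you explicitly leave the disjoint-beak case as ``the technical heart of the proof and where I expect the most delicate work,'' without giving an argument. Everything you have written up to that point (the block-tree reduction, the induction on $r+s$, and the common-beak case) is reasonable, but it only sets up the problem; the actual content of the lemma lives in the case you have not done. Moreover, your outline for that case (``force a common beak, produce a direct $P$-path, or violate non-crossing'') is not obviously a trichotomy, and you give no indication of how to effect it.

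The paper's proof avoids this difficulty entirely by not inducting on beak count. Instead, it takes a pair of shortest witnessing chains in $\Up$ and $\Down$ and looks only at the \emph{last} beak on each, say $x_iy_i$ in $\Up$ and $y_jx_j$ in $\Down$. A short argument shows $y_i\neq y_j$ and $x_i\neq x_j$ (else one beak could be skipped), and then a direct planar-separation argument---casing on whether $w'$ lies to the right of beak $j$, to the left of beak $i$, or between them---shows that a $P$-path from one side of a beak to $w'$ topologically separates $x_i$ from $y_j$ (or vice versa), forcing $w$ to lie in two regions at once. This handles both points on the outer cycle in one stroke; the remaining cases (one or both points internal, or in different blocks) then reduce to it. In particular, the paper never needs a global ``disjoint beak sets'' hypothesis: it only needs the two last beaks to be distinct, which follows immediately from minimality. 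That is precisely the idea your sketch is missing.
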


\begin{proof}
  It is sufficient to show that if $w\not<w'$ in $P$, then one of the
  extensions preserves this (non)relation.  We begin by considering
  the situation where $w$ and $w'$ are in the same $2$-connected block of the cover
  graph. We first address the case where $w$ and $w'$ are both on the
  outer cycle of a $2$-connected block and then reduce the remaining
  cases to this one. We conclude by addressing what happens when $w$ and
  $w'$ are in different blocks.
  
  \textbf{Case I} Suppose $w$ and $w'$ are both on the outer cycle $C$
  of a $2$-connected block $B$ and that $w<w'$ in both $\Up$ and
  $\Down$. There are directed paths (chains) from $w$ to $w'$ in both
  $\Up$ and $\Down$. We consider the shortest of these paths in the
  sense of containing the fewest beaks. Let $x_i y_i$ be the last beak
  on the path in $\Up$, and $y_j x_j$ be the last beak on the path in
  $\Down$. If $y_i = y_j$, then since $y_i < w'$ in $P$, there is a
  shorter path in $\Down$ that skips $y_jx_j$. Thus $y_i\neq y_j$. For
  a similar reason, $x_i\ne x_j$.

  Without loss of generality, assume that $i<j$. Suppose $w'$ is right of
  $y_j x_j$ (allowing $w'=x_j$) and consider a path in $P$ from
  $y_i$ to $w'$. By minimality, this path cannot pass through $y_j$,
  because then $y_j x_j$ could be skipped. Hence, the path separates
  $x_i$ from $y_j$. Notice that $w$ is not on the path from $y_i$ to
  $w'$, as this would imply $w<w'$ in $P$. Therefore, $w$ would have
  to be in both (topological) regions, which is a contradiction.  A
  similar contradiction can be derived if $w'$ is left of $x_i y_i$ or
  $w'=y_i$. In that case the path from $x_j$ to $w'$ in $P$ would
  separate $x_i$ from $y_j$.

  This leaves only the possibility that $w'$ is between the two
  beaks. If $w'$ is on the $x_i x_j$ arc of the outer cycle, then the
  path from $y_i$ to $ w'$ separates $x_i$ from $y_j$, and if $w'$ is
  on the path from $y_i$ to $ y_j$, then the $x_j w'$ path performs
  the separation. Therefore, we may conclude that $w\not<w'$ in $\Up$
  or $\Down$.

  \textbf{Case II} Still assuming $w$ and $w'$ are in the same
  $2$-connected block, we now suppose that exactly one of them is on
  the outer cycle $C$. Specifically, we will consider the case when
  $w$ is on $C$ and $w'$ is not, and the ear conatining $w'$ is right
  of $w$. This is just for convenience of discussion; the other three
  possibilities have identical proofs.

  Suppose there is a directed path from $w$ to $w'$ in both $\Up$ and
  $\Down$; consider one of these that goes through the minimum number
  of (newly-directed) beaks.  First note that $w'$ cannot be a peak of
  a downbeak, since that would make $w'$ minimal in $P$ and thus in
  $\Up$ and $\Down$. If $w'$ is a subdividing point of a unidirected ear,
  then let $u<w'$ be its attachment point. We have $w\not< u$ in $P$,
  so by Case I, we maintain this in one of $\Up$ or $\Down$. That
  extension preserves $w\not< w'$.

  The remaining possibility in this case is that $w'$ is the peak of
  an upbeak. By the minimality of the path from $w$ to $w'$, the path
  uses no beaks right of the beak containing $w'$. For the purpose of
  the argument, we may ignore all ears, chords, and points of $C$
  strictly right from the beak of $w'$. By so doing, $w'$ becomes a
  point on the outer cycle, and by Case I, one of $\Up$ or $\Down$
  will preserve $w\not< w'$.

  \textbf{Case III} To conclude the scenario where both $w$ and $w'$
  are in the same $2$-connected block, it remains only to address the
  case when neither of them is on $C$.  Without loss of generality
  assume that $w$ is left of $w'$. Considering a path from $w$ to $w'$
  in $\Upsilon$ or $\Delta$
  through the fewest number of beaks, we may assume that this path
  does not touch any part of the block left of $w$ and right of
  $w'$. (If either $w$ or $w'$ is part of a unidirected ear, using
  these portions would imply the existence of a directed cycle, and
  for beak peaks the path can be shortened by going via the other
  attachment point.) By ignoring the parts of the block left of $w$
  and right of $w'$, we place $w$ and $w'$ on an outer cycle, and thus
  Case I guarantees one of $\Up$ and $\Down$ preserves $w\not< w'$.

  \textbf{Case IV} It remains only to consider the case where no
  $2$-connected block contains both $w$ and
  $w'$.  If $w$ and $w'$ lie in
  different components of the cover graph, both $\Up$ and $\Down$
  preserve $w\not< w'$. Hence, we may assume there exists a path in
  the cover graph from $w$ to $w'$. (Since $w\not < w'$ in $P$, this
  path is \emph{not} a directed path.) Let the $2$-connected blocks
  containing an edge of the path be called $B_1,B_2,\ldots,B_l$. Note that we allow $l=0$ if the path does not pass
  through any 2-connected blocks, in which case $\Upsilon$ and
  $\Delta$ do not introduce comparabilities that could make $w$ and
  $w'$ comparable. Let $a_i$ and $b_i$ be the
  (uniquely-determined) entry and exit vertices of the path into and
  out of $B_i$; if $w\in B_1$, then let $a_1=w$, and if $w'\in B_l$,
  then let $b_l=w'$.

  If $a_i\le b_i$ in $P$ for all $i=1,2,\ldots,l$, then since the path
  from $w$ to $w'$ in the cover graph of $P$ is not directed, $w\not <
  w'$ must be forced by consecutive edges of the path that are
  oppositely-oriented and do not both lie in the same $2$-connected
  block. Therefore, $\Up$ and $\Down$ preserve $w\not<w'$. On the
  other hand, if there exists an $i_0$ such that
  $a_{i_0}\not<b_{i_0}$, then this (non)relation is preserved in one
  of $\Up$ or $\Down$. That extension preserves $w\not < w'$, since
  any directed path from $w$ to $w'$ would have to pass through the
  points $a_i$ and $b_i$, but there is no directed path between them
  in that extension. Therefore, we have shown $w\not<w'$ in at least of
  $\Up$ and $\Down$.
  \end{proof}

  As we combine the three preceding lemmas to prove our main theorem,
  we will reduce to a poset with an outerplanar cover graph. The
  following result guarantees that such posets have small dimension.

  \begin{theorem}[Felsner, Trotter, and Wiechert
    \cite{Fel-Tro-Wie-13-u}]
\label{thm:outerplanar}
If a poset\/ $P$ has an outerplanar cover graph, then\/ $\dim(P)\leq 4$.
\end{theorem}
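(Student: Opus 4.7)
My plan is to prove Theorem~\ref{thm:outerplanar} by induction on the block structure of $G$, reducing to the $2$-connected case and then constructing four linear extensions tailored to the cyclic structure of $2$-connected outerplanar graphs.

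First I would reduce to the case where $G$ is $2$-connected. Given a poset $P$ whose cover graph $G$ is outerplanar but not $2$-connected, decompose $G$ into its $2$-connected blocks and cut vertices; each block, being an induced subgraph of an outerplanar graph, is itself outerplanar. Applying the inductive hypothesis to each block yields realizers of size at most $4$, and these can be merged consistently at cut vertices into realizers of the whole poset, since a cut vertex $c$ joining two blocks creates no new comparabilities beyond those already witnessed within each block (comparabilities across $c$ factor through $c$ and can be realized by gluing the block extensions along $c$).

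Next, for a $2$-connected outerplanar graph $G$, the structure is particularly restrictive: $G$ consists of a Hamiltonian outer cycle $C$ together with non-crossing chords in its interior. Orient every edge of $G$ according to $P$'s cover relations. Along $C$, this orientation partitions the cycle into maximal monotone arcs that meet at peaks (local maxima) and valleys (local minima). I would then construct four linear extensions based on two orthogonal binary choices: clockwise versus counterclockwise traversal of $C$, combined with a preference for placing each element as low as possible versus as high as possible subject to that traversal. This yields four extensions designed to reverse different categories of incomparable pairs, with the tree-cover case of Theorem~\ref{T:treedim} as a guide for how three of these would interact and the fourth handling the single global cycle.

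The main obstacle will be verifying that every incomparable pair is reversed by at least one of these extensions, especially for pairs $(u,w)$ that are separated by a chord. A chord $v_iv_j$ divides the interior into two arcs, and incomparabilities between elements in different arcs are delicate because the chord's own orientation constrains how the two sides can be interleaved in any linear extension. The key observation should be that because the chords are non-crossing, the interior admits a recursive decomposition along chords, and the preference rules in each of the four extensions propagate consistently through this decomposition. In the worst case, the verification reduces to an explicit case analysis parameterized by the chord's orientation and the relative positions of $u$ and $w$ with respect to the chord endpoints, showing that at least one of the four combinations of traversal direction and preference produces the desired reversal. A subtle point will be bookkeeping at peaks and valleys of $C$, where the two traversal directions meet and the preference rules must be defined so that no incomparable pair is ``missed'' by all four extensions.
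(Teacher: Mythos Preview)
The paper does not prove Theorem~\ref{thm:outerplanar}; it is quoted from Felsner, Trotter, and Wiechert~\cite{Fel-Tro-Wie-13-u} and used as a black box in the proof of Theorem~\ref{T:full-main}. So there is no ``paper's own proof'' to compare against, and your task is really to supply an independent proof.

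Your outline has two genuine gaps. First, the block reduction is not as automatic as you suggest. It is true that every comparability between elements in different blocks factors through the cut vertex, but the issue is reversing \emph{incomparable} pairs across blocks while keeping only four linear extensions. Given size-$4$ realizers for two blocks meeting at a cut vertex $c$, you must interleave each extension of one block with an extension of the other so that every cross-block incomparable pair is reversed somewhere; ``gluing along $c$'' does not by itself guarantee this. A correct argument here needs an explicit pairing of the block extensions and a verification that all cross-block reversals occur, and this is where an extra linear extension can creep in if one is not careful.

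Second, and more seriously, your four linear extensions in the $2$-connected case are not actually defined. A clockwise or counterclockwise traversal of the Hamiltonian cycle $C$ is a cyclic order, not a linear extension of $P$; you have not said where to break the cycle or how the traversal interacts with the orientation of edges (which need not be monotone along $C$). The phrase ``placing each element as low as possible versus as high as possible subject to that traversal'' does not specify a total order, and there is no reason to expect that an arbitrary such heuristic yields a linear extension at all, let alone one with the required reversal properties. The chords are the real difficulty: a chord forces a comparability between two non-adjacent cycle vertices, and your ``recursive decomposition along chords'' is asserted rather than constructed. Until the four extensions are written down precisely, the case analysis you allude to cannot even begin. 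The actual Felsner--Trotter--Wiechert argument is considerably more delicate than this sketch; if you want to prove the theorem yourself you will need to give an explicit construction of the realizer and a complete verification.
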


We are finally ready to state the full version of our main theorem.

\begin{theorem}\label{T:full-main}
  Let\/ $P$ be a poset with cover graph\/ $G$. If\/ $G$ is\/
  $\mathcal{F}$-minor free, then\/ $\dim(P)\leq 17$.
\end{theorem}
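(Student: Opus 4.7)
The plan is to telescope Lemmas \ref{lem:subdivision}, \ref{lem:up-down}, and \ref{lem:p-is-up-down} with Theorem \ref{thm:outerplanar} into a single chain of inequalities. First I would fix a canonical embedding of $G$ and let $Z$ denote the set of points that are not on the unbounded face and are neither minimal nor maximal in $P$. A brief structural check using the previous section shows that $Z$ is precisely the set of internal vertices of unidirected ears: beak peaks are extremal and hence excluded, while every other non-outer-face vertex of a $2$-connected block is the interior of some subdivided chord. Applying Lemma \ref{lem:subdivision} then gives $\dim(P) \leq 2\dim(P - Z) + 1$, reducing the task to showing $\dim(P - Z) \leq 8$.

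Next I would turn to $P - Z$. Its cover graph $G'$ arises from $G$ by suppressing each degree-$2$ vertex of $Z$ and inserting an edge between its two neighbors, which is a minor operation, so $G'$ remains $\mathcal{F}$-minor free. I would then apply Lemmas \ref{lem:up-down} and \ref{lem:p-is-up-down} to $P - Z$ to obtain extensions $\Up$ and $\Down$ of $P - Z$ with $P - Z = \Up \cap \Down$. The essential point is that the only subdivided ears of $G'$ are beaks (the unidirected subdivided chords of $G$ have been desubdivided), and the construction in Lemma \ref{lem:up-down} turns every beak into an edge plus a pendant vertex. Consequently the cover graphs of $\Up$ and $\Down$ are \emph{fully} outerplanar, with no residual ``chord replaced by a directed path of length $2$.'' Theorem \ref{thm:outerplanar} then gives $\dim(\Up), \dim(\Down) \leq 4$, and combining a realizer of $\Up$ with one of $\Down$ produces a realizer of $P - Z$, so $\dim(P - Z) \leq 8$. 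Substituting back yields $\dim(P) \leq 17$.

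I do not anticipate a serious obstacle, as the three preceding lemmas carry the structural and dimensional content. The only points requiring care are verifying that $G'$ inherits the $\mathcal{F}$-minor-free property from $G$ (handled by the degree-$2$ suppression observation) and checking that Lemma \ref{lem:up-down} applied to $P - Z$ produces \emph{strictly} outerplanar cover graphs, which holds precisely because every subdivided chord of $G'$ is a beak. If anything is delicate, it is the accounting: the budget of $17$ is exactly $2 \cdot (4 + 4) + 1$, so we cannot afford a looser bound at any stage of the telescope.
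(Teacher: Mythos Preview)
Your proposal is correct and follows essentially the same route as the paper's proof: apply Lemma~\ref{lem:subdivision} to reduce to $P-Z$, then apply Lemmas~\ref{lem:up-down} and~\ref{lem:p-is-up-down} to $P-Z$ to obtain $\Up$ and $\Down$ with outerplanar cover graphs, invoke Theorem~\ref{thm:outerplanar}, and combine. One small imprecision: the cover graph of $P-Z$ is not literally obtained by suppressing each $z\in Z$ and inserting the edge $\ell u$, since $\ell<u$ need not be a cover in $P-Z$ (the paper notes this in the proof of Lemma~\ref{lem:subdivision}); but either contracting or deleting at $z$ is a minor operation, so your conclusion that $G'$ is $\mathcal{F}$-minor free survives, and the rest of the telescope is exactly as in the paper.
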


\begin{proof}
  Begin by fixing a canonical embedding of $G$ in the plane and, as in
  Lemma~\ref{lem:subdivision}, let $Z$ be the collection of points
  that are not on the unbounded face of $G$ and are neither minimal
  nor maximal in $P$. By Lemma~\ref{lem:subdivision}, we know that
  $\dim(P)\leq 2\dim(P-Z)+1$. We now claim that $\dim(P-Z)\leq 8$,
  which will prove the theorem.  

  Applying Lemmas~\ref{lem:up-down} and \ref{lem:p-is-up-down} to
  $P-Z$, we find that $P-Z$ has two extensions $\Up$ and $\Down$ for
  which $P-Z=\Up\cap \Down$. Furthermore, since $P-Z$ does not contain
  any unidirected ears, the process of constructing $\Up$ and $\Down$
  cannot introduce unidirected ears, and the comparabilities added to
  form $\Up$ and $\Down$ turn beak peaks into vertices of degree $1$
  in the cover graphs, we have that $\Up$ and $\Down$ have outerplanar
  cover graphs. Therefore, by Theorem~\ref{thm:outerplanar}, there are
  realizers $\mathcal{R}_\Up$ and $\mathcal{R}_\Down$ of $\Up$ and
  $\Down$, respectively, with
  $|\mathcal{R}_\Up|,|\mathcal{R}_\Down|\leq 4$. Since
  $P-Z=\Up\cap\Down$, we know that $\mathcal{R}_\Up
  \cup\mathcal{R}_\Down$ is a realizer of $P-Z$. Therefore, $\dim(P-Z)\leq
  8$ and $\dim(P)\leq 17$.
\end{proof}

To obtain Theorem~\ref{T:weak-main}, we now note that if $P$ is a
poset with cover graph $G$ of pathwidth at most $2$, then $G$ is
$\mathcal{F}$-minor free, so Theorem~\ref{T:full-main} implies
$\dim(P)\leq 17$. It is natural to wonder whether the bound of
Theorem~\ref{T:full-main} is best possible. We have no reason to
believe the result is optimal and suspect it may be possible to
reduce the bound to $4$ with more work. That would be best possible,
as Felsner, Trotter, and Wiechert give a $4$-dimensional poset having
cover graph with pathwidth $2$ in \cite{Fel-Tro-Wie-13-u}.

We also note that Trotter \cite{Tro-per-13} has subsequently made an
observation regarding the relationship between dimension and the block
structure of the cover graph, making it possible to drop $T_3$, $T_4$,
and $T_5$ from the list of forbidden minors. However, that approach
leads to a weaker bound on the dimension than the one we
offer here.

\section{Standard Examples and Treewidth}

A second question posed in \cite{Jor-Mic-Mil-Tro-Wal-Wan-13-u} remains open.

\begin{question}\label{ques:tw2}
  Is there a constant $d$ such that if $P$ is a poset with cover
  graph $G$ and $\tw(G)\leq 2$, then $\dim(P)\leq d$?
\end{question}

The following theorem provides some weak evidence for an
affirmative answer to this question, since the theorem implies
that if the answer to Question~\ref{ques:tw2} is ``no'', a
counterexample cannot be constructed using large standard examples.

\begin{theorem}\label{T:S5}
  If\/ $P$ is a poset that contains the standard example\/ $S_5$ as a
  subposet, then the cover graph of\/ $P$ has treewidth at least 3.
\end{theorem}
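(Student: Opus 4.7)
The plan is to prove the contrapositive: if the cover graph $G$ of $P$ satisfies $\tw(G)\le 2$ — equivalently, if $G$ is $K_4$-minor free — then $P$ cannot contain $S_5$ as a subposet. Since $K_4$ has maximum degree $3$, a $K_4$-minor in $G$ coincides with a $K_4$-subdivision, so the goal reduces to exhibiting four branch vertices in $G$ together with six pairwise internally disjoint paths joining them.

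Assume an $S_5$ sits inside $P$ with minimal elements $a_1,\dots,a_5$ and maximal elements $b_1,\dots,b_5$, so $a_i<_P b_j$ iff $i\ne j$. For each such pair, fix a saturated chain $C_{ij}$ from $a_i$ to $b_j$ in $P$; this is a monotone path in $G$. The first step is the direct attempt. Take branch vertices $u_1=a_1$, $u_2=a_2$, $u_3=b_3$, $u_4=b_4$. Four of the six required paths are immediately available as $C_{13},C_{14},C_{23},C_{24}$. For the remaining two pairs, the fifth index supplies bridges at the opposite level: set $P_{12}:=C_{15}\cup C_{25}$, a path from $a_1$ to $a_2$ through $b_5$, and $P_{34}:=C_{53}\cup C_{54}$, a path from $b_3$ to $b_4$ through $a_5$. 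If these six paths are pairwise internally disjoint, they form a $K_4$-subdivision in $G$, the desired contradiction.

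Monotonicity of chains kills the first round of potential obstructions for free: the bridge peak $b_5$ cannot be an interior vertex of $C_{13}$, $C_{14}$, $C_{23}$, or $C_{24}$, because that would place $b_5$ below some $b_k$ in $P$ and contradict the incomparability of $b_k$ and $b_5$ inside $S_5$; symmetrically $a_5$ cannot be interior to any of those four chains. So the peaks of the bridges are always safe. The remaining technical work is to handle the residual case in which two of the chosen chains share an interior vertex $v$. Such a $v$ is forced to sit in the common up-set of two of the $a_i$ and in the common down-set of two of the $b_j$, producing extra monotone paths in $G$ that can be harvested.

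I would then carry out a case analysis exploiting both the twenty-chain redundancy and the symmetry of $S_5$ under relabeling: whenever the direct quadruple $(a_1,a_2,b_3,b_4)$ is obstructed by a particular intersection pattern, either reindexing $\{1,\dots,5\}$ or substituting a different $C_{ij}$ from among the twenty available chains repairs the configuration, or else the offending intersection vertex $v$ is promoted to a branch vertex and, together with path-segments sourced from the chains already running through it, yields a $K_4$-subdivision on an alternative quadruple. The main obstacle I expect is the bookkeeping — arguing that no combined pattern of shared vertices can simultaneously obstruct every candidate $K_4$-model. This is also where $S_5$ is essential and $S_4$ would not suffice: the fifth index is precisely what supplies the two bridging paths $P_{12}$ and $P_{34}$ and provides the combinatorial slack to reroute around unavoidable intersections, which is why the authors describe the argument as rather technical.
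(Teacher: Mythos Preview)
Your overall direction is right---reduce to exhibiting a $K_4$-subdivision in the cover graph---and this is also what the paper does. But the proposal stops exactly where the real argument begins, and it is missing the structural device that makes the paper's case analysis manageable.

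The paper does \emph{not} take the $a_i,b_j$ themselves as branch vertices. It first restricts to the $S_3$ on indices $\{1,2,3\}$ and introduces auxiliary points: $c_i$ chosen \emph{maximal} in $U[a_i]\cap D[b_{i+1}]\cap D[b_{i+2}]$, and then $d_i$ chosen \emph{minimal} in $U[c_{i+1}]\cap U[c_{i+2}]\cap D[b_i]$. These extremality choices are precisely what force the six chain-paths $P(c_i,d_j)$, $i\ne j$, to be pairwise internally disjoint, so one obtains a genuine $6$-cycle $C$ in the cover graph with no case analysis at all. Only then does the paper study how $a_4,b_4,a_5,b_5$ attach to $C$, and even with this clean scaffolding the remaining argument is several pages of delicate casework (including further auxiliary points $\alpha_i,\beta_i,\alpha_i',\beta_i'$, a rerouting trick, and a final reduction to a $12$-vertex graph).

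Your direct setup with branch vertices $a_1,a_2,b_3,b_4$ carries no such disjointness guarantee. The four chains $C_{13},C_{14},C_{23},C_{24}$ may overlap in arbitrary patterns, and your bridges $P_{12}=C_{15}\cup C_{25}$ and $P_{34}=C_{53}\cup C_{54}$ need not even be simple paths (the two halves of $P_{12}$ can share interior vertices strictly below $b_5$). The claim that ``reindexing, swapping in a different $C_{ij}$, or promoting an intersection vertex'' will always repair the configuration is the entire content of the theorem, and you have offered no argument that the resulting case tree closes. Given how intricate the paper's proof already is \emph{with} the $c_i,d_i$ device in hand, an approach without it is not obviously viable; at minimum you would need to discover an analogous extremal selection to tame the intersections before the bookkeeping becomes feasible.
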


\begin{proof}
  Since $\tw(K_4) =3$, it will suffice to show that the cover graph of
  $P$ has a $K_4$-minor. (In fact, more is true, in that $K_4$ is the
  only forbidden minor required to characterize graphs of treewidth
  $2$.) Since the notions of containing a $K_4$-minor and containing
  $K_4$ as a topological minor are equivalent, we use an approach that
  blends both techinques by seeking branch sets of a $K_4$ minor and
  joining them by internally disjoint paths. To aid in exposition, we
  will not fully specify the branch sets. Instead, we will refer to
  vertices or sets of vertices as being \emph{corners} of the $K_4$
  minor if they lie in distinct branch sets. We denote
  a path between any two comparable elements $x$ and $y$ such that the
  path represents a maximal chain between $x$ and $y$ in $P$ by $P(x,y)$. 

  Let $\set{a_1, \ldots, a_5}$ and $\set{b_1, \ldots, b_5}$ be
  elements of the subposet of $P$ isomorphic to $S_5$ with the
  standard ordering, that is, $a_i < b_j$ if and only if $i \neq j$.
  We first restrict our attention to the copy of $S_3$ determined by
  $\set{a_1,a_2,a_3,b_1,b_2,b_3}$.  In this context, fix $c_i$ as one
  of the maximal elements in $U[a_i] \cap D[b_{i+1}] \cap D[b_{i+2}]$
  where the subscripts are interpreted cyclically among $\set{1,2,3}$.
  Notice that $\set{c_1,c_2,c_3}$ is an antichain
  in $P$ since $a_i$ is incomparable to $b_i$ for all $i$. In a
  similar manner, fix $d_i$ as a minimal element in $U[c_{i+1}] \cap
  U[c_{i+2}] \cap D[b_i]$.
  Thus the poset $P$
  contains four (not necessarily disjoint) antichains
  $\set{a_1,a_2,a_3}, \set{b_1,b_2,b_3}, \set{c_1,c_2,c_3},$ and
  $\set{d_1,d_2,d_3}$ together with paths
  $P(a_i,c_i)$ and $P(d_i,b_i)$ for $i \in \set{1,2,3}$ and paths
  $P(c_i,d_j)$ for $i,j \in \set{1,2,3}$ with $i\neq j$.  
  See Figure
  \ref{F:standard3}. It is a straightforward, but tedious argument, to
  verify that these paths are all internally disjoint.  We call the subposet on these elements $S$.

  \begin{figure}[h]
    \centering
    \begin{overpic}[scale=0.6]{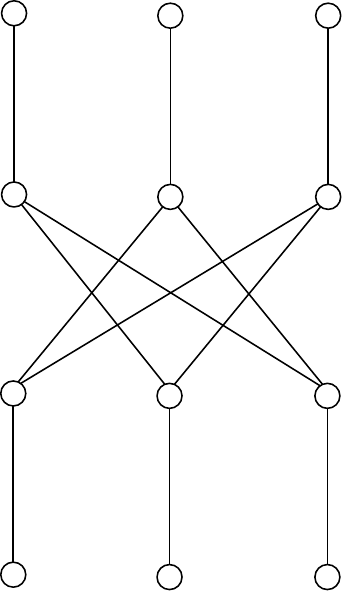}
      \put(-1,-7){$a_1$}
      \put(27,-7){$a_2$}
      \put(51,-7){$a_3$}
      \put(-1,103){$b_1$}
      \put(27,103){$b_2$}
      \put(51,103){$b_3$}
      \put(-7,23){$c_1$}
      \put(19,23){$c_2$}
      \put(57,23){$c_3$}
      \put(-7,71){$d_1$}
      \put(19,71){$d_2$}
      \put(57,71){$d_3$}
    \end{overpic}
    \caption{The subposet $S$ with vertices internal to chains/paths not shown.}
    \label{F:standard3}
  \end{figure}

  After noting that $P(c_i,d_j)$ is internally disjoint from $P(c_{i'},d_{j'})$
  when $(i,j)\neq (i',j')$, it is easy to see that
  \[P(c_1,d_2),P(d_2,c_3),P(c_3,d_1),P(d_1,c_2),P(c_2,d_3),P(d_3,c_1)\]
  is a cycle in the cover graph of $P$. We denote this cycle by $C$. Thus, if any element $x$ of
  the poset is connected to this cycle by three paths intersecting
  only at $x$,
  then the cover graph contains a $K_4$-minor, as desired. Noting that
  $a_4 < b_1,b_2,b_3$ we now consider the relationship between $a_4$
  and $S$. Suppose first that $a_4$ is not less than any element of
  $\set{c_1,c_2,c_3}$. By our definitions, every element of $C -
  \set{c_1,c_2,c_3}$ is less than precisely one element of
  $\set{b_1,b_2,b_3}$. Hence, there exist three paths $P_1,P_2,P_3$ in
  the cover graph from $a_4$ to $C$. (Note that these paths may use
  the paths $P(b_i,d_i)$ if $a_4$ is not less than some of the $d_i$.)
  Each $P_i$ enters $C$ at a distinct point, creating a
  $K_4$-minor.

  Therefore, we may assume that $a_4$ is less than one element of
  $\set{c_1,c_2,c_3}$, say $c_1.$ By a similar argument, we may assume
  $b_4$ is greater than an element of
  $\set{d_1,d_2,d_3}$. Furthermore, since $b_4$ is incomparable to
  $a_4$ while $d_2$ and $d_3$ are comparable to $c_1$, our assumption
  that $a_4 < c_1$ forces $d_1$ to be the element of $\{d_1,d_2,d_3\}$
  that is less than $b_4$. Note that the incomparability between $a_4$
  and $b_4$ implies that $a_4$ is incomparable to $c_2$ and $c_3$ and
  $b_4$ is incomparable to $d_2$ and $d_3$. Additionally, there is a
  vertex $\beta_4$ on $P(d_1,b_1)$ such that $\beta_4 < b_4$ and a
  vertex $\alpha_4'$ on $P(a_1,c_1)$ such that $a_4 <
  \alpha_4'$.
  Since $a_4 < b_1$ and $a_4$ is incomparable to $b_4$, there is some
  element $\alpha_4$ on $P(d_1,b_1)$ with $\alpha_4>\beta_4$ and
  $a_4 < \alpha_4$. Similarly, there is an element $\beta_4'$ on
  $P(a_1,c_1)$ with $\beta_4'<\alpha_4'$ and $\beta_4' < b_4$. See
  Figure \ref{F:standard_wing} for an illustration of the relationship
  between these points. In a similar manner, we can find a
  $j \in \set{1,2,3}$ and elements $\beta_5$ on $P(d_j,b_j)$ and
  $\beta_5'$ on $P(a_j,c_j)$ such that $\beta_5,\beta_5' < b_5$. There
  are also elements $\alpha_5, \alpha_5' > a_5$ such that
  $\alpha_5 > \beta_5$ on $P(d_j,b_j)$ and $\alpha_5' > \beta_5'$ on
  $P(a_j,c_j)$. If there are multiple choices for $\beta_i$,
  $\beta'_i$, $\alpha_i$, and $\alpha'_i$ that satisfy all these
  requirements, we choose $\beta_i$ and $\beta'_i$ to be maximal and
  $\alpha_i$ and $\alpha'_i$ to be minimal among the possible choices.
  By our definitions of the $c_i$ and $d_j$, it is straightforward,
  but tedious, to verify that $P(a_4,\alpha_4), P(a_4,\alpha_4'),$ and
  $P(\beta_4,b_4)$ are internally disjoint from $S$.  Further,
  $P(\beta_4',b_4)$ is internally disjoint from $S$ except for
  possibly $P(c_2,d_3)$ and $P(c_3,d_2)$.

Suppose then that $P(\beta_4',b_4)$ intersects both $P(c_2,d_3)$ and
$P(c_3,d_2)$. Let $K_4-e$
  denote the graph that results from deleting any edge from $K_4$.  It
  is easy to see that there is a $(K_4-e)$-minor with corners $c_1$,
  $d_1$, and the two intersection points of the path $P(\beta'_4,b_4)$
  with $P(c_2,d_3)$ and $P(c_3,d_2)$.  (Note that this minor can be
  formed using only $C$ and the part of $P(\beta'_4,b_4)$ between
  $P(c_2,d_3)$ and $P(c_3,d_2)$.) The missing connection to
  complete the $K_4$-minor is the edge between $c_1$ and $d_1$.
  However, as $P(d_1,\alpha_4)P(\alpha_4,a_4)P(a_4,\alpha_4')P(\alpha'_4,c_1)$ is disjoint from the
  cycle $C$ and $P(\beta_4',b_4)$, this completes the $K_4$-minor.
  Thus we may assume that $P(\beta_4',b_4)$ intersects only one of
  $P(c_2,d_3)$ and $P(c_3,d_2)$.  Without loss of generality, suppose
  the intersected path is
  $P(c_2,d_3)$ and let $z$ be the maximal point of intersection.  We note now
  that there is a cycle formed by 
\[
P(z,b_4)P(b_4,\beta_4)P(\beta_4,\alpha_4)P(\alpha_4,a_4)P(a_4,\alpha_4')P(\alpha_4',c_1)P(c_1,d_3)P(d_3,z).\]
Furthermore, the point $d_1$ has three distinct paths to this cycle,
forming a $K_4$ minor.  Thus the paths $P(a_4,\alpha_4)$,
$P(a_4,\alpha_4')$, $P(b_4,\beta_4)$, and $P(b_4,\beta_4')$ are all
internally disjoint from $S$ as shown in Figure \ref{F:standard_wing}.

\begin{figure}[h]
    \centering\vspace{7pt}
    \begin{overpic}[height=102pt]{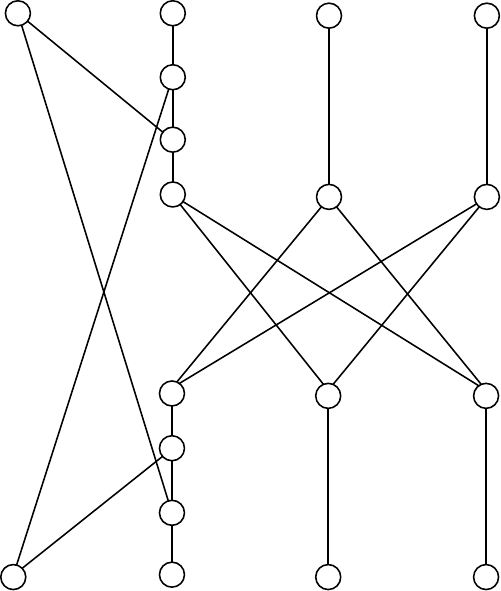}
      \put(26,-7){$a_1$}
      \put(54,-7){$a_2$}
      \put(78,-7){$a_3$}
      \put(-1,-7){$a_4$}
      \put(26,103){$b_1$}
      \put(54,103){$b_2$}
      \put(78,103){$b_3$}
      \put(-1,103){$b_4$}
      \put(33,31){$c_1$}
      \put(59,31){$c_2$}
      \put(87,31){$c_3$}
      \put(33,66){$d_1$}
      \put(59,66){$d_2$}
      \put(87,66){$d_3$}
      \put(33,75){$\beta_4$}
      \put(33,85){$\alpha_4$}
      \put(33,10){$\beta_4'$}
      \put(33,21){$\alpha_4'$}      
    \end{overpic}
    \caption{Expanding $S$ by adding $a_4,b_4,\alpha_4,\beta_4,\alpha_4',\beta_4'$.}
    \label{F:standard_wing}
  \end{figure}

  We consider the cases where $j \neq 1$ and $j= 1$
  separately. (Recall that $j$ is the index such that $\beta_5\in
  P(d_j,b_j)$.) For
  the former, suppose without loss of generality that $j = 3$, as
  depicted in Figure~\ref{fig:double-wing}.
  \begin{figure}
    \centering\vspace{7pt}
    \begin{overpic}[height=102pt]{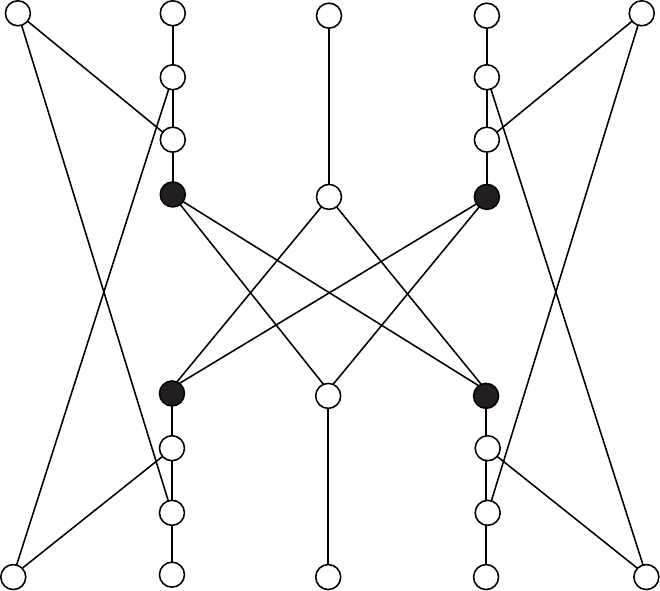}
      \put(24,-7){$a_1$}
      \put(48,-7){$a_2$}
      \put(71,-7){$a_3$}
      \put(-1,-7){$a_4$}
      \put(96,-7){$a_5$}
      \put(24,93){$b_1$}
      \put(48,93){$b_2$}
      \put(71,93){$b_3$}
      \put(-1,93){$b_4$}
      \put(96,93){$b_5$}
      \put(31,30){$c_1$}
      \put(53,30){$c_2$}
      \put(72,34){$c_3$}
      \put(31,59){$d_1$}
      \put(53,59){$d_2$}
      \put(72,49){$d_3$}
      \put(31,67){$\beta_4$}
      \put(31,76){$\alpha_4$}
      \put(31,9){$\beta_4'$}
      \put(31,20){$\alpha_4'$}      
      \put(61,67){$\beta_5$}
      \put(61,76){$\alpha_5$}
      \put(61,9){$\beta_5'$}
      \put(61,20){$\alpha_5'$}      
    \end{overpic}
    \caption{The case where $a_5$ and $b_5$ attach to different paths
      than $a_4$ and $b_4$.}
    \label{fig:double-wing}
  \end{figure}
  In this case, if the following six paths are internally disjoint,
  they form a $K_4$-minor with corners $c_1$, $d_1$, $c_3$, and $d_3$:
  \begin{itemize}
  \item $P(d_1,c_2)P(c_2,d_3)$,
  \item $P(d_3,\alpha_5)P(\alpha_5,a_5)P(a_5,\alpha_5')P(\alpha_5',c_3)$,
  \item $P(c_3,d_2)P(d_2,c_1)$,
  \item $P(c_1,\alpha_4')P(\alpha_4',a_4)P(a_4,\alpha_4)P(\alpha_4,d_1)$,
  \item $P(d_1,c_3)$, and
  \item $P(d_3,c_1)$.
  \end{itemize}
  The internal disjointness of each pair of the paths above is clear
  with the possible exception of the second path and the fourth
  path. However, if these paths fail to be disjoint, their
  intersection point has $3$ paths to distinct vertices of $C$,
  creating a $K_4$-minor.

  The most delicate part of our argument remains in the case where
  $j=1$. We consider now the paths that enter
  $P(d_1,b_1)$. Specifically, we examine the relationships between
  $P(a_4,\alpha_4), P(a_5,\alpha_5), P(b_4,\beta_4),$ and
  $P(b_5,\beta_5)$. The paths entering $P(a_1,c_1)$ featuring the
  $\alpha_i'$ and $\beta_i'$ will interact identically by duality.  It
  is clear that $P(a_4,\alpha_4)$ and $P(b_4,\beta_4)$ do not
  intersect, as otherwise $a_4 < b_4$. (A similar argument applies to
  $P(a_5,\alpha_5)$ and $P(b_5,\beta_5)$.) Suppose then that
  $P(a_4,\alpha_4)$ and $P(b_5,\beta_5)$ intersect at some point $x$,
  while $P(a_5,\alpha_5)$ and $P(b_4,\beta_4)$ do not
  intersect. Furthermore, if the paths $P(a_4,\alpha_4)$ and
  $P(b_5,\beta_5)$ intersect more than once, we will assume that $x$
  is the minimal such intersection (in terms of the poset).

  Now consider rerouting the path $P(d_1,b_1)$ through $x$. The
  new path will be the concatenation of $P(d_1,\beta_5)$,
  $P(\beta_5,x)$, $P(x,\alpha_4)$, and $P(\alpha_4,b_1)$. We then
  choose the new vertices $\hat\alpha_4$, $\hat\alpha_5$,
  $\hat\beta_4$, $\hat\beta_5$ appropriately, recalling that they are
  chosen to be maximal or minimal amongst possible options.  The new
  paths $P(a_4,\hat{\alpha}_4)$ and $P(b_5,\hat{\beta}_5)$ are
  internally disjoint by construction.  Suppose now that $\alpha_5$ is
  a element of the path $P(\beta_5,\alpha_4)$ and consider the cycle
  formed by $P(\beta_5,x)$, $P(x,\alpha_4)$, and
  $P(\alpha_5,\beta_5)$.  (See Figure~\ref{F:reroute-x}.)
  \begin{figure}[h]
    \centering
    \begin{overpic}[scale=0.6]{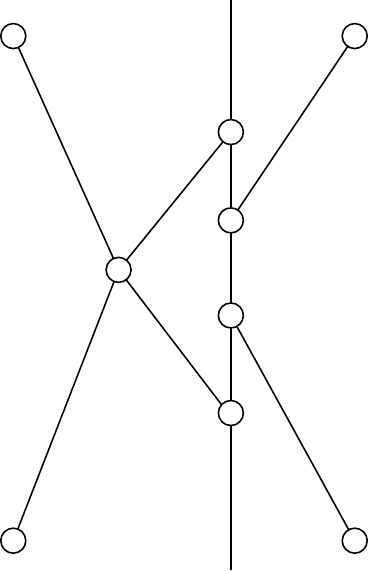}
      \put(-1,99){$b_5$}
      \put(60,99){$b_4$}
      \put(29,96){$b_1$}
      \put(-1,-4){$a_4$}
      \put(60,-4){$a_5$}
      \put(29,0){$d_1$}
      \put(12,51){$x$}
      \put(27,76){$\alpha_4$}
      \put(29,58){$\beta_4$}
      \put(28,43){$\alpha_5$}
      \put(27,27){$\beta_5$}
    \end{overpic}
    \caption{Rerouting $P(d_1,b_1)$ via $x$.}
    \label{F:reroute-x}
  \end{figure}
  Observe that there are three disjoint paths---namely, $P(x,a_4),
  P(\beta_5,d_1)$, and $P(\alpha_5,a_5)$---emanating from the
  cycle. Since $a_4,a_5,$ and $d_1$ all connect to the path
  $P(a_1,c_1)$, these three vertices are all in the same connected
  component after deleting the cycle. Therefore, we have found a
  $K_4$-minor.   In a similar manner, we may assume that $\beta_4$ is
  not on the path $P(\beta_5,\alpha_4)$.  Thus we have that
  $\hat{\alpha}_5 = \alpha_5$ and $\hat{\beta}_4 = \beta_4$,  and
  furthermore by our assumptions, the paths
  $P(a_5,\hat{\alpha}_5)$ and $P(\hat{\beta}_4,b_4)$ do not intersect.

  Now consider the case where, in addition, $P(a_5,\alpha_5)$ and
  $P(b_4,\beta_4)$ intersect at some point $y$, again choosing $y$ as
  the minimal intersection point. Since $\beta_5 < x < \alpha_4$,
  $\beta_4 < y < \alpha_5$, $\beta_4 < \alpha_4$, and $\beta_5 <
  \alpha_5$, we have that $\set{\beta_4,\beta_5} <
  \set{\alpha_4,\alpha_5}$. Since $a_i$ is incomparable to $b_i$ in
  the poset, we must have that $x$ and $y$ are incomparable as
  well. This implies that any intersection between $P(x,\beta_5)$ and
  $P(y,\beta_4)$ occurs at a point less than both $x$ and $y$ on these
  paths. Similarly, any intersection between $P(x,\alpha_4)$ and
  $P(y,\alpha_5)$ must be greater than both $x$ and $y$.  It
  is then easy to see that there is a $(K_4 - e)$-minor with corners
  $x,y,\set{\beta_4,\beta_5},$ and $\set{\alpha_4,\alpha_5}$, possibly
  adding intersection points between $P(x,\beta_5)$ and $P(y,\beta_4)$
  to $\set{\beta_4,\beta_5}$ and intersection points between
  $P(x,\alpha_4)$ and $P(y,\alpha_5)$ to
  $\set{\alpha_4,\alpha_5}$. The missing connection to complete a
  $K_4$-minor is between $x$ and $y$. However, notice that $x$ and $y$
  are connected by a path through $a_4$, $a_5$, and $P(a_1,c_1)$,
  giving the needed path to complete the minor.

  We are now able to make a fairly strong assumption about the
  pairwise intersections of
  $P(a_4,\alpha_4),P(a_5,\alpha_5),P(b_4,\beta_4),$ and
  $P(b_5,\beta_5)$. Of the six possible crossings, the only two that
  can occur are $P(a_4,\alpha_4)$ with $P(a_5,\alpha_5)$ and
  $P(b_4,\beta_4)$ with $P(b_5,\beta_5)$. Furthermore, these
  intersections imply that $\alpha_4 = \alpha_5$ or $\beta_4 =
  \beta_5$, respectively, by the maximality of the $\beta_i$ and
  minimality of the $\alpha_j$.

  Having established these intersection limitations (and the
  corresponding ones for the $\alpha_i'$ and $\beta_i'$), we consider
  the graph formed by contracting each of $P(a_i,\alpha_i)$,
  $P(a_i,\alpha_i')$, $P(b_i,\beta_i)$, and $P(b_i,\beta_i')$ for $i =
  4,5$ to a single edge. In fact, we go further and contract
  (arbitrarily) all the edges we can while ensuring that the
  $\alpha_i$, $\beta_i$, $\alpha_i'$, $\beta_i'$, $a_i$, and $b_i$ are
  not identified for $i=4,5$. Since it is possible for some of these
  vertices to have been equal at the outset, we are then left with a
  graph with at most $12$ vertices. (We refer to the vertices as
  having labels to allow that, for example, $\alpha_4$ and $\alpha_5$
  may refer to the same vertex.)  The resulting graph is built up from
  a path in which the vertices with labels $V =
  \set{\alpha_4,\alpha_5,\beta_4,\beta_5}$ appear consecutively, as do
  the vertices with labels $V' =
  \set{\alpha_4',\alpha_5',\beta_4',\beta_5'}$. In addition to this
  primary path, the graph resulting from the contraction contains a
  collection of $4$ paths of length $2$ (via the $a_i$ and $b_i$)
  connecting vertices in $V$ and $V'$.

  We now show that in all but one case (to be described later), this
  graph has a $K_4$-minor. Let $M'$ be the labels of the maximum
  vertices (with respect to the poset) of those with labels in
  $V'$ and similarly define $M$ as the labels of the minimal vertices of
  those with labels in $V$.
  \begin{figure}[h]
    \centering
    \begin{overpic}[scale=0.6]{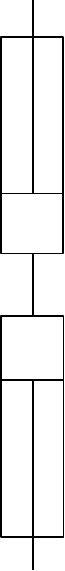}
      \put(11,20){$V'$}
      \put(11,73){$V$}
      \put(0,36){\scriptsize$M'$}
      \put(1,58){\scriptsize$M$}
    \end{overpic}
    \caption{Relation of $V$, $M$, $V'$, and $M'$ in the graph after contractions.}
    \label{F:M-block}
  \end{figure}
Since $M$
  and $M'$ each correspond to a single vertex, they cannot contain
  two labels with the same subscript and must respect the ordering on
  elements with the same subscript. Thus, $M \in \set{\set{\beta_4},\set{\beta_5},
    \set{\beta_4,\beta_5}}$ and $M' \in \set{\set{\alpha_4'},
    \set{\alpha_5'}, \set{\alpha_4',\alpha_5'}}$. We now construct the $K_4$-minor
  using $M$, $V -  M$, $M'$, and $V' -  M'$ as the
  corners. Figure~\ref{F:M-block} makes clear that (with appropriate
  contractions), $V -  M,M,M',V' -  M'$ is a path of
  length $3$. 

  To construct the $K_4$-minor, it suffices to show that there are
  connections between (1) $M$ and $V' - M'$, (2) $M'$ and $V - M$, and
  (3) $V - M$ and $V' - M'$. Since $\alpha_i$ and $\alpha'_i$ are
  connected via $a_i$ for $i=4,5$, and $\beta_i$ and $\beta'_i$ are
  connected via $b_i$ for $i=4,5$, the first two cases are immediately
  resolved because of the possible contents of $M$ and $M'$.
  Furthermore, for the third pair, the only situation where we do not
  immediately see a connection between $V - M$ and $V' - M'$ is when
  $M = \set{\beta_4,\beta_5}$ and $M' = \set{\alpha_4',\alpha_5'}$.

  In this case, the poset must contain the paths depicted in
  Figure~\ref{F:final-case}. However, in this case there is a path
  between $a_4$ and $b_5$ since $a_4<b_5$ in the poset. This path is
  not depicted in Figure~\ref{F:final-case}. If this path is disjoint
  from $C$ it is straightforward to verify that there is a $K_4$-minor
  with corners  $a_4$, $b_5$, $\alpha_4'$, and $\beta_5$.  Otherwise,
  as $a_i$ is incomparable to $b_i$ for $i = 4,5$, the path from $a_4$
  to $b_5$ can only intersect $C$ in the paths $P(c_2,d_3)$ or
  $P(c_3,d_2)$.  Without loss of generality, suppose the path from
  $a_4$ to $b_5$ intersects $C$ at $P(c_2,d_3)$.  Then there is a
  $K_4$-minor with corners $a_4$, $b_5$, $\beta_5$, and $P(c_2,d_3)$.  
  \begin{figure}[h]
    \centering
    \begin{overpic}[scale=0.6]{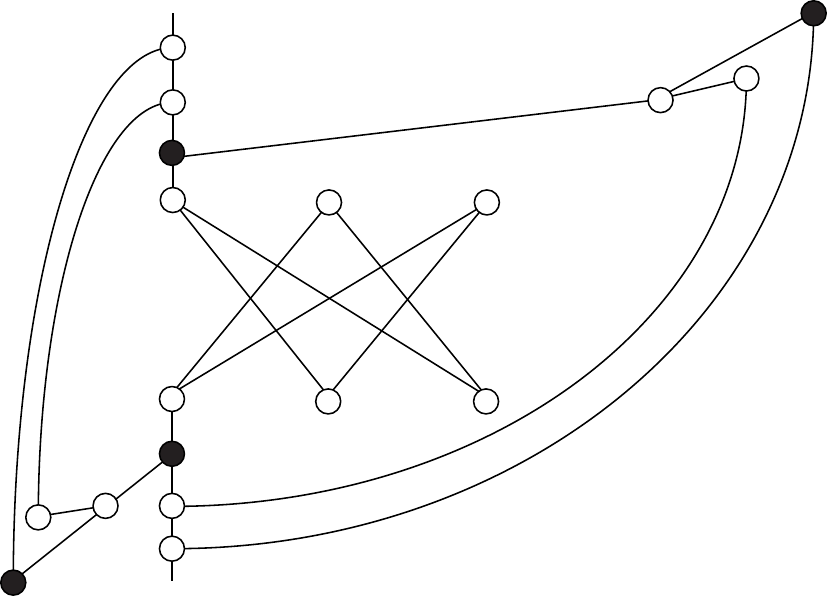}
      \put(-1,-4){$a_4$}
      \put(5,12){$a_5$}
      \put(101,70){$b_5$}
      \put(92,62){$b_4$}
      \put(23,16){$\alpha_4'$}
      \put(13,52){$\beta_5$}
    \end{overpic}
    \caption{There must also be a path from $b_5$ to $a_4$.}
    \label{F:final-case}
  \end{figure}
\end{proof}

We conclude this section by observing that for any $x\in S_5$, there
is a poset containing $S_5 -  x$ and having a cover graph
of treewidth 2. We show an example in Figure \ref{F:optimal}, with the
poset on the left and a redrawing of the cover graph on the
right. Notice that $S_5 - x$ is the subposet
formed by the elements other than $u$ and $v$. (Since the graph is clearly $K_4$-minor-free, it has treewidth
at most $2$.) This implies that Theorem~\ref{T:S5} is best
possible.
\begin{figure}[h]
  \centering
  \begin{overpic}[scale=0.6]{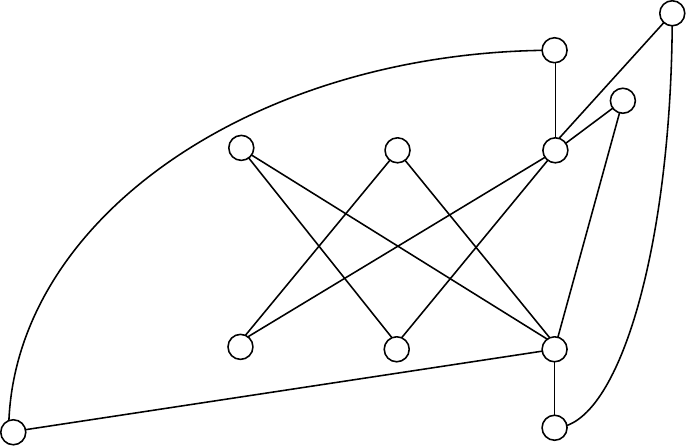}\put(83,13){$u$}\put(74,42){$v$}\end{overpic}\hspace{0.5in}\begin{overpic}[scale=0.6]{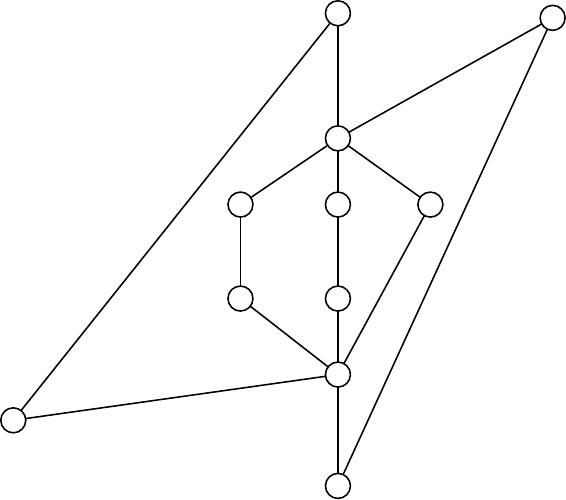}\put(52,12){$u$}\put(52,63){$v$}\end{overpic}
  \caption{A poset containing $S_5 - x$ (left) with cover
    graph $G$ (right) redrawn to help show $G$ does not contain a
    $K_4$-minor, and therefore $\tw(G)=2$.}
  \label{F:optimal}
\end{figure}

\section*{Update on Question~\ref{ques:tw2}}
While this paper was under review, Joret, Micek, Trotter, Wang, and
Wiechert announced that they have resolved Question~\ref{ques:tw2} in
the affirmative~\cite{joret:tw-2} with a bound on the dimension of 1276.

\section*{Acknowledgments}

We are grateful to the three anonymous referees for their careful
reading of the paper; their feedback significantly improved the final
version. We also thank one referee for bringing Spinrad's subdivision
paper \cite{spinrad:subdiv} to our attention.

\bibliographystyle{acm}
\bibliography{main}

\end{document}